\documentclass[11pt]{article}
 \usepackage{amsmath,amssymb,amsthm}
 \RequirePackage[dvips]{graphicx}
  \usepackage{cite}

 \def\draw #1 by #2 (#3){
  \vbox to #2{
    \hrule width #1 height 0pt depth 0pt
    \vfill
    \special{picture #3} 
    }
  }

 \def\scaleddraw #1 by #2 (#3 scaled #4){{
  \dimen0=#1 \dimen1=#2
  \divide\dimen0 by 1000 \multiply\dimen0 by #4
  \divide\dimen1 by 1000 \multiply\dimen1 by #4
  \draw \dimen0 by \dimen1 (#3 scaled #4)}
  }

\newtheorem{theorem}{Theorem}[section]
\newtheorem{example}{Example}
\newtheorem{problem}[example]{Problem}
\newtheorem{defin}[theorem]{Definition}
\newtheorem{lemma}[theorem]{Lemma}
\newtheorem{corollary}[theorem]{Corollary}

\newtheorem{remark}[theorem]{Remark}
\newtheorem{nt}{Note}

\allowdisplaybreaks[4]  

 \newcommand{\singlespacing}{\let\CS=\@currsize\renewcommand{\baselinestretch}{1}\tiny\CS}
 \newcommand{\oneandahalfspacing}{\let\CS=\@currsize\renewcommand{\baselinestretch}{1.25}\tiny\CS}
 \newcommand{\doublespacing}{\let\CS=\@currsize\renewcommand{\baselinestretch}{1.35}\tiny\CS}

 \newtheorem{rule-def}[theorem]{Rule}

\begin{document}
\baselineskip 16pt
 \newcommand{\la}{\lambda}
 \newcommand{\si}{\sigma}
 \newcommand{\ol}{1-\lambda}
 \newcommand{\be}{\begin{equation}}
 \newcommand{\ee}{\end{equation}}
 \newcommand{\bea}{\begin{eqnarray}}
 \newcommand{\eea}{\end{eqnarray}}

  \baselineskip=0.23in

 \begin{center}
 {\Large \bf Hoffman-type Results for the Sum of $k$ Largest\\[2mm] Eigenvalues of a Graph}\\

  \vspace{10mm}

 {\large \bf Shaowei Sun$^{a}$, Mengyao Guo$^{a}$, Hongyan Ge$^{a}$, Kinkar Chandra Das$^{b,}\footnote{Corresponding author}$}

 \vspace{9mm}

 \baselineskip=0.20in
$^a${\it School of Science, Zhejiang University of Science and Technology,\\
 Hangzhou, Zhejiang, 310023, PR China \/}\\
{\rm E-mail:} {\tt sunshaowei2009@126.com,\,793126388@qq.com,\,398643629@qq.com}\\[2mm]
$^b${\it Department of Mathematics, Sungkyunkwan University,\\
Suwon 16419, Republic of Korea\/}\\[2mm]
{\rm E-mail:} {\tt kinkardas2003@gmail.com}

 \vspace{4mm}

 (Received September 21, 2026)

 \vspace{5mm}

 \end{center}

 \baselineskip=0.23in

 \begin{abstract}
Let $S_k(G)$ denote the sum of the $k$ largest eigenvalues
of a graph $G$. Motivated by the classical Hoffman program for the
spectral radius of a graph, we investigate an additive Hoffman-type
problem for $S_k(G)$. For each fixed $k\geq 2$ and sufficiently large
order $n$, we characterize all connected graphs satisfying
$S_k(G)<2k$. As a consequence, we prove that the path $P_n$ is the
unique minimizer of $S_k(G)$ among all connected graphs of order $n$.

\vspace*{2mm}

We further investigate the first Hoffman-type range
\[
2k\leq S_k(G)<2k+\sqrt{2+\sqrt5}-2.
\]
We completely characterize the non-tree graphs in this range and
reduce the tree case to several explicit families. The proofs combine Ky Fan's variational principle,
spectral estimates from vertex-disjoint subgraphs, structural results
for graphs with small spectral radius, and long-path arguments for
bounded-degree graphs.

 \vspace{5mm}

\noindent
 {\it\bf AMS classification:\/} 05C50; 15A18. \\[3mm]
 {\it\bf Keywords:\/} Eigenvalue sum, Adjacency matrix, Graph spectrum, Path graph. 

 \end{abstract}

 \vspace*{3mm}

 \baselineskip=0.30in

 \section{Introduction}
The eigenvalues of a graph contain important information about its
structure, and extremal problems concerning graph eigenvalues have
been extensively studied in spectral graph theory. Let
$\lambda_1(G)\geq \lambda_2(G)\geq \cdots \geq \lambda_n(G)$
be the adjacency eigenvalues of a graph $G$ of order $n$. For
$1\leq k\leq n$, define
\[
S_k(G)=\sum_{i=1}^{k}\lambda_i(G).
\]
In particular, $S_1(G)=\lambda_1(G)$ is the spectral radius of $G$,
one of the most fundamental spectral parameters. Thus, $S_k(G)$ may
be viewed as a natural extension of the spectral radius. By Ky Fan's
variational principle \cite{FAN2}, $S_k(G)$ can be expressed as the
maximum of the sum of the corresponding Rayleigh quotients over a set
of orthonormal vectors. It therefore extends the spectral radius while retaining a useful variational description.

Most previous work on sums of the largest eigenvalues of a graph and symmetric matrices has concerned upper bounds and maximization. Mohar \cite{MOHAR}
obtained a general upper bound for the sum of the $k$ largest
eigenvalues of a graph. This was further developed by Nikiforov
\cite{NIKIFOROV}, and several subsequent improvements and extensions were
obtained; see, for example, \cite{DMS,SMD2}. 
The case $k=2$ has received particular attention. Gernert asked whether $S_2(G)\leq n$ for every graph $G$ of order $n$. Nikiforov
\cite{NI} subsequently disproved this assertion and established linear bounds on the maximum of $S_2(G)$. Ebrahimi et al. \cite{EMNA} improved the upper bound and conjectured that $S_2(G)\leq \frac{8n}{7}$.
Recently, Kumar et al. \cite{KLMPT} proved
this conjecture for all graphs. Very recently, Huang and Wei
\cite{HW} completely determined the exact maximum of $S_2(G)$ for
every $n\geq 5$, together with all equality cases.

In contrast, considerably less is known about the corresponding
minimum problem. Kumar et al. \cite{KMPZ} studied the extremal values of
$S_2(G)$ for trees and more general convex combinations of the first
two adjacency eigenvalues. In \cite{SMD1}, we characterized all
connected graphs satisfying $S_2(G)<4$ and proved, as a consequence,
that the path graph is the unique minimizer of $S_2(G)$ among all
connected graphs of order $n\geq 467$.  These results naturally lead
to the following general question:
for a fixed $k$, which connected graph minimizes $S_k(G)$ when its order is sufficiently lagre?

A second motivation comes from the classical Hoffman
program for the spectral radius of a graph. Smith \cite{SMITH}
characterized the connected graphs with spectral radius at most $2$.
Hoffman \cite{Hoffman} determined all limit points below
$$\eta:=\sqrt{2+\sqrt5}.$$
Cvetkovi\'c, Doob and Gutman \cite{CDG}, followed by Brouwer and Neumaier
\cite{BN}, characterized the connected graphs with spectral radius between
$2$ and $\eta$.  For recent developments on the Hoffman program, we refer to \cite{WWBBW}.
These results reveal the fundamental role of the two thresholds
$2$ and $\eta$ in the characterization of graphs with small
spectral radius.

Since $S_1(G)$ is exactly the spectral radius of $G$, it is natural
to ask whether the Hoffman-type threshold phenomena persist for $S_k(G)$. A key observation in the present paper is that
the classical spectral-radius thresholds admit natural additive
counterparts for the sum of $k$ largest eigenvalues. In this
sense, our results may be viewed as an additive analogue of the
Hoffman program for eigenvalue sums. Therefore, we study the Hoffman program on the sum of $k$ largest eigenvalues of a graph. 

Our first main result gives a complete characterization of connected
graphs satisfying $S_k(G)<2k$ when the order is sufficiently large
relative to $k$. This characterization immediately leads to
the minimum problem: we prove that the path graph is the unique
minimizer of $S_k(G)$ among all connected graphs of sufficiently
large order. 
We then go one step further and consider the first range above the
threshold $2k$. We completely determine the non-tree graphs satisfying
$ 2k\leq S_k(G)<2k-2+\eta$ for sufficiently large order $n$. For trees, we reduce the problem to several explicit
families. Thus, the two spectral ranges
studied in this paper can be viewed as additive counterparts of the
classical ranges determined by the thresholds $2$ and
$\eta$ for the spectral radius.

The rest of the paper is organized as follows. In Section~2, we
introduce the notation and recall several preliminary results. In
Section~3, we develop the main tools for estimating $S_k(G)$ from
vertex-disjoint subgraphs, characterize the connected graphs with
$S_k(G)<2k$, and determine the unique minimizer. In Section~4, we
study graphs whose eigenvalues sums lie in the first range above $2k$.
Finally, some concluding remarks and an open problem are given in
Section~5.

 \section{Preliminaries}
 In this section, we introduce some notation and graph families used
throughout the paper, and recall several known results that will be
needed in the subsequent sections.

\subsection{Notation and graph families}
Throughout this paper, all graphs are finite, simple and undirected.
Let $G$ be a graph with vertex set
$V(G)=\{v_1,v_2,\ldots,v_n\}$ and edge set $E(G)$. The order and
size of $G$ are denoted by $n=|V(G)|$ and $m=|E(G)|$, respectively.
For a vertex $v\in V(G)$, let $N_G(v)$ and $N_G[v]$ denote its open
and closed neighborhoods, respectively, and let
$d_G(v)=|N_G(v)|$ be its degree. We write $\Delta(G)$, or simply
$\Delta$, for the maximum degree of $G$. 
For two vertices $u,v\in V(G)$, let
$d_G(u,v)$ denote the distance between $u$ and $v$. For $S\subseteq V(G)$,
let $G[S]$ denote the subgraph induced by $S$. When there is no
ambiguity, the subscript $G$ will be omitted.

We use $P_n$ and $C_n$ to denote the path and the cycle of order $n$,
respectively. Let $P_n=v_1v_2\cdots v_n$. The graph $Y_n$ is obtained
from $P_n$ by deleting $v_1v_2$ and adding $v_1v_3$. For $n\geq6$,
the graph $W_n$ is obtained from $P_n$ by deleting $v_1v_2$ and
$v_{n-1}v_n$, and adding $v_1v_3$ and $v_{n-2}v_n$.
For positive integers $a,\, b,\,c$, let $T_{a,b,c}$ denote the
T-shape tree obtained by identifying one endpoint from each of
$P_{a+1}$, $P_{b+1}$ and $P_{c+1}$. Let $U_t$ denote the graph
obtained from $C_t$ by attaching a pendant edge to one vertex of
$C_t$.

For $1\le m_1<\cdots<m_t\le p-2$ and positive integers $n_1,\ldots,n_t$,
let
\[
 P_{n_1,\ldots,n_t,p}^{m_1,\ldots,m_t}
\]
be the tree obtained from the path $P_p:0\sim1\sim\cdots\sim p-1$ by
attaching a path of $n_i$ vertices at $m_i$, for each $i$.
Its order is $p+\sum_i n_i$. These trees are open quipus. Whenever this
underlying path is chosen to be diametral, its parameters satisfy
\[
 n_i\le\min\{m_i,p-1-m_i\}\qquad(1\le i\le t).
\]
Similarly, let
\[
C_{n_1,n_2,\ldots,n_t,p}^{m_1,m_2,\ldots,m_t}
\]
denote the graph obtained from the cycle
$C_p:0\sim1\sim\cdots\sim p-1\sim 0 $
by attaching a path of $n_i$ vertices at the vertex $m_i$,
for each $i=1,2,\ldots,t$. Graphs of this form are closed quipus.
In particular, $U_t\cong C_{1,t}^{0}$.

\subsection{Preliminary results}

We now recall several known results that will be used throughout the
paper. We begin with Ky Fan's variational principle.

  \begin{lemma} {\rm \cite{FAN2}} \label{lm1}
 Let $M$ be a symmetric matrix with eigenvalues $\lambda_1\geq \lambda_2\geq \cdots\geq \lambda_n$. Then
 $\lambda_1+ \lambda_2+ \cdots+ \lambda_r= \sup\{u_1^TMu_1+u_2^TMu_2+ \cdots + u_r^TMu_r\}$ $(r = 1, 2,\ldots, n)$, where the
 supremum is taken over all orthonormal vectors $u_1, u_2,\ldots, u_r$.
 \end{lemma}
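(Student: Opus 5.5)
The plan is to prove Ky Fan's principle by reducing to the spectral decomposition of $M$ and then bounding a linear functional of a "doubly substochastic" weight vector. Fix an orthonormal eigenbasis $x_1,\ldots,x_n$ of $M$ with $Mx_j=\lambda_j x_j$; this exists by the spectral theorem for real symmetric matrices.

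\emph{Lower bound.} Choosing $u_i=x_i$ for $i=1,\ldots,r$ gives $\sum_{i=1}^r u_i^TMu_i=\lambda_1+\cdots+\lambda_r$. Hence the supremum is at least this sum, and it is in fact attained.

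\emph{Upper bound.} Take arbitrary orthonormal $u_1,\ldots,u_r$ and expand $u_i=\sum_j c_{ij}x_j$. Then
\[
\sum_{i=1}^r u_i^TMu_i=\sum_{j=1}^n w_j\lambda_j,\qquad w_j:=\sum_{i=1}^r c_{ij}^2 .
\]
The weights satisfy two constraints:
\begin{itemize}
\item $\sum_j w_j=\sum_i\|u_i\|^2=r$;
\item $0\le w_j\le 1$ for every $j$, since $w_j=\sum_i (u_i^Tx_j)^2$ is the squared norm of the orthogonal projection of the unit vector $x_j$ onto $\mathrm{span}\{u_1,\ldots,u_r\}$ (Bessel's inequality).
\end{itemize}

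It remains to show that $\sum_j w_j\lambda_j\le\sum_{j\le r}\lambda_j$ under these constraints. Write
\[
\sum_{j\le r}\lambda_j-\sum_j w_j\lambda_j=\sum_{j\le r}(1-w_j)\lambda_j-\sum_{j>r}w_j\lambda_j .
\]
Every coefficient $1-w_j$ (for $j\le r$) and $w_j$ (for $j>r$) is nonnegative. Moreover, the constraint $\sum_j w_j=r$ gives $\sum_{j\le r}(1-w_j)=\sum_{j>r}w_j$. Since $\lambda_j\ge\lambda_r\ge\lambda_{j'}$ for $j\le r<j'$, the difference is at least
\[
\lambda_r\Big(\sum_{j\le r}(1-w_j)-\sum_{j>r}w_j\Big)=0 .
\]
Taking the supremum over all orthonormal families then yields equality with $\lambda_1+\cdots+\lambda_r$.

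The only step needing care is the bound $w_j\le1$. It is what makes the extremal problem a rearrangement-type inequality; without it, all the weight could be placed on $\lambda_1$. It follows immediately from Bessel's inequality applied to the orthonormal family $u_1,\ldots,u_r$, so I expect no real obstacle.
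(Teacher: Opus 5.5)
Your proof is correct and complete. The expansion in an orthonormal eigenbasis, the constraints $\sum_j w_j=r$ and $0\le w_j\le 1$ (the latter by Bessel's inequality), and the exchange argument around $\lambda_r$ together give the upper bound. Choosing the eigenvectors themselves shows the supremum is attained.

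The paper gives no proof of this lemma; it simply quotes it from Ky Fan~\cite{FAN2}, so there is no in-paper argument to compare against.

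A common alternative route uses a tool the paper also cites, Cauchy interlacing (Lemma~\ref{lm6}). Let $U$ be the $n\times r$ matrix with columns $u_1,\ldots,u_r$, so that $\sum_i u_i^TMu_i=\mathrm{tr}(U^TMU)$. Complete $U$ to an orthogonal matrix $Q$. Then $U^TMU$ is the leading $r\times r$ principal submatrix of $Q^TMQ$, whose eigenvalues are those of $M$. Interlacing gives $\lambda_i(U^TMU)\le\lambda_i$ for each $i$, and summing yields the bound. That route is shorter once interlacing is available, but interlacing is itself usually derived from the Courant--Fischer theorem. Your argument needs nothing beyond the spectral theorem and Bessel's inequality.

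The paper only ever uses the inequality $\sum_{i=1}^r u_i^TMu_i\le\lambda_1+\cdots+\lambda_r$, in Lemma~\ref{lms1} and Lemma~\ref{lms9}, to bound $S_k$ from below by test vectors. That is exactly the half of your proof where the bound $w_j\le 1$ is essential.
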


We next recall the Cauchy interlacing theorem.
\begin{lemma} {\rm \cite{SCH}} \label{lm6}
 Let $B$ be an $n\times n$ symmetric matrix and let $B_r$ be its $r\times r$ principal submatrix. Then, for $i=1,\,2,\ldots,\,r$,
 $$\lambda_{i}(B)\geq \lambda_i(B_r)\geq \lambda_{n-r+i}(B),$$
 where $\lambda_{i}(B)$ and $\lambda_i(B_r)$ denote the $i$-th largest eigenvalue of $B$ and $B_r$, respectively.
 \end{lemma}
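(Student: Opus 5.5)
The statement to be proved is the Cauchy interlacing theorem (Lemma~\ref{lm6}). I would derive it from the Courant--Fischer min-max characterization of eigenvalues, which is the single-eigenvalue counterpart of the variational description in Lemma~\ref{lm1}. Recall that for a real symmetric $n\times n$ matrix $B$,
\[
\lambda_i(B)=\max_{\dim U=i}\ \min_{0\neq x\in U}\frac{x^TBx}{x^Tx}
=\min_{\dim W=n-i+1}\ \max_{0\neq x\in W}\frac{x^TBx}{x^Tx}.
\]
Without loss of generality, after a simultaneous permutation of rows and columns (a similarity by a permutation matrix, which preserves the spectrum), I take $B_r$ to be the leading $r\times r$ principal submatrix. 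Let $E\subseteq\mathbb{R}^n$ be the $r$-dimensional coordinate subspace of vectors supported on the first $r$ coordinates. For $y\in\mathbb{R}^r$, write $\hat y\in E$ for its zero extension. Then $\hat y^TB\hat y=y^TB_ry$ and $\|\hat y\|=\|y\|$, so Rayleigh quotients of $B_r$ are exactly Rayleigh quotients of $B$ restricted to $E$.

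\emph{Upper inequality.} Choose an $i$-dimensional subspace $U\subseteq\mathbb{R}^r$ that attains the max-min for $\lambda_i(B_r)$. Its image $\hat U\subseteq\mathbb{R}^n$ is also $i$-dimensional, and the minimum Rayleigh quotient of $B$ on $\hat U$ equals $\lambda_i(B_r)$. By the max-min formula for $B$, this gives $\lambda_i(B)\geq\lambda_i(B_r)$.

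\emph{Lower inequality.} Apply the min-max form to $B_r$: choose $W\subseteq\mathbb{R}^r$ of dimension $r-i+1$ with $\max_{x\in W}R_{B_r}(x)=\lambda_i(B_r)$. Its zero extension $\hat W$ has dimension $r-i+1=n-(n-r+i)+1$. The min-max formula for $\lambda_{n-r+i}(B)$ runs over subspaces of exactly this dimension, so $\lambda_{n-r+i}(B)\leq\max_{x\in\hat W}R_B(x)=\lambda_i(B_r)$.

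The only point needing care is the index bookkeeping in the lower bound: one must check that $\dim\hat W=r-i+1$ matches the dimension $n-j+1$ required for $j=n-r+i$. Everything else is routine.

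Alternatively, one could prove the case $r=n-1$ directly, using dimension counting with eigenvectors, and then induct on $n-r$. The direct subspace argument above avoids the induction.
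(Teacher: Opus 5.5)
The paper gives no proof of this lemma: it quotes the result from \cite{SCH} as a standard tool, and uses it for Corollary~\ref{lm7} and in Step~3 of the proof of Theorem~\ref{smlm6}. So there is no argument in the paper to compare yours against.

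Your proof is correct and is the standard textbook argument.
\begin{itemize}
\item Zero extension identifies the Rayleigh quotients of $B_r$ with those of $B$ on a coordinate subspace.
\item The max--min form of Courant--Fischer then gives $\lambda_i(B)\ge\lambda_i(B_r)$.
\item The min--max form gives $\lambda_{n-r+i}(B)\le\lambda_i(B_r)$, and your dimension check $n-(n-r+i)+1=r-i+1$ is right.
\end{itemize}
The permutation step is harmless but unnecessary. You can work directly with the coordinate subspace indexed by the rows of $B_r$.

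One small economy is available. The lower inequality follows from the upper one applied to $-B$ and its principal submatrix $-B_r$. For an $m\times m$ symmetric matrix $M$ we have $\lambda_j(-M)=-\lambda_{m-j+1}(M)$, so the upper inequality becomes $\lambda_{r-j+1}(B_r)\ge\lambda_{n-j+1}(B)$. Setting $i=r-j+1$ gives exactly $\lambda_i(B_r)\ge\lambda_{n-r+i}(B)$. Hence only the max--min half of Courant--Fischer is actually needed.
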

 
As an immediate consequence, we obtain the following monotonicity
property of $S_k(G)$ with respect to induced subgraphs.
 \begin{corollary} \label{lm7} 
 Let $G$ be a graph, and let $H$ be its induced subgraph. Then $S_k(G)\geq S_k(H)$.
 \end{corollary}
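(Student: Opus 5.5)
Since $H$ is an induced subgraph of $G$ on some vertex set $S\subseteq V(G)$ with $|S|=r$, the adjacency matrix $A(H)$ is exactly the $r\times r$ principal submatrix of $A(G)$ indexed by $S$. We apply Lemma~\ref{lm6} with $B=A(G)$ and $B_r=A(H)$ to get $\lambda_i(G)\geq\lambda_i(H)$ for every $i=1,\ldots,r$.

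Suppose first that $k\leq r$. Summing the inequalities $\lambda_i(G)\geq\lambda_i(H)$ over $i=1,\ldots,k$ gives $S_k(G)\geq S_k(H)$ directly.

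The only point needing care is the case $k>r=|V(H)|$, where $S_k(H)$ must be read under some convention. The natural convention is $S_k(H)=S_r(H)$, the sum of all eigenvalues of $H$, which equals the trace of $A(H)$ and hence is $0$. Under this convention we need $S_k(G)\geq 0$ for every $k\leq n$. This holds because the partial sums $S_j(G)$ are concave in $j$: the eigenvalues are nonincreasing, so the increments $\lambda_j(G)$ decrease. Since $S_0(G)=0$ and $S_n(G)=\operatorname{tr}A(G)=0$, concavity forces $S_j(G)\geq 0$ for all $0\leq j\leq n$.

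Equivalently, once $\lambda_i(G)$ becomes negative it stays negative, so dropping the tail terms $\lambda_{k+1}(G),\ldots,\lambda_n(G)$ from the total sum $0$ can only leave a nonnegative quantity. This settles $k>r$, and I expect no real obstacle here: the whole argument is interlacing plus this trace remark. Alternatively, one could argue via Lemma~\ref{lm1} by extending top eigenvectors of $H$ by zeros to obtain orthonormal test vectors for $A(G)$, but the interlacing route is more direct.
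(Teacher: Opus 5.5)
Your proof is correct and is essentially the paper's argument: the paper gives no separate proof and calls the corollary an immediate consequence of the interlacing theorem (Lemma~\ref{lm6}) applied to the principal submatrix $A(H)$ of $A(G)$, summed over $i=1,\ldots,k$, which is exactly your main step. Your extra treatment of the case $k>|V(H)|$ goes beyond what the paper addresses. The convention $S_k(H)=\operatorname{tr}A(H)=0$ together with the concavity of $j\mapsto S_j(G)$ and $S_0(G)=S_n(G)=0$ is valid.
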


The following result gives the relation between the spectral radii of
a graph and its subgraphs.
 \begin{lemma}  {\rm \cite{BOOK}} \label{lm3}
 If $H$ is a subgraph of a graph $G$, then $\lambda_1(H)\leq \lambda_1(G)$.
 \end{lemma}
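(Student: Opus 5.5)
The statement is Lemma~\ref{lm3}, the classical monotonicity of the spectral radius under taking subgraphs. I would prove it from Ky Fan's principle (Lemma~\ref{lm1}) with $r=1$, which is just the Rayleigh quotient characterization $\lambda_1(M)=\max_{\|u\|=1}u^TMu$, together with the Perron--Frobenius fact that the adjacency matrix of a graph has a nonnegative unit eigenvector for $\lambda_1$.

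\textbf{Step 1: reduce to a common vertex set.} Let $H$ have vertex set $V(H)\subseteq V(G)$ and edge set $E(H)\subseteq E(G)$. Extend $H$ to a spanning subgraph $H'$ of $G$ by adding the vertices of $V(G)\setminus V(H)$ as isolated vertices. Then $A(H')$ is, up to a permutation of coordinates, the direct sum $A(H)\oplus 0$. Its spectrum is that of $A(H)$ together with some zeros. Since $\lambda_1(H)\ge 0$ (the trace of $A(H)$ is $0$, or $H$ has at least one vertex), we get $\lambda_1(H')=\lambda_1(H)$.

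\textbf{Step 2: entrywise comparison.} Now $A(H')$ and $A(G)$ are indexed by the same set, and $0\le A(H')\le A(G)$ entrywise. By Perron--Frobenius, applied componentwise to the nonnegative symmetric matrix $A(H')$, choose a unit eigenvector $x\ge 0$ for $\lambda_1(H')$. Then
\[
\lambda_1(H)=\lambda_1(H')=x^TA(H')x=\sum_{ij\in E(H')}2x_ix_j\le \sum_{ij\in E(G)}2x_ix_j=x^TA(G)x\le \lambda_1(G).
\]
The first inequality uses $x_ix_j\ge 0$ and $E(H')\subseteq E(G)$. The last uses Lemma~\ref{lm1} with $r=1$.

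\textbf{Main obstacle.} The only point needing care is the nonnegativity of $x$, since otherwise dropping edges could increase the quadratic form. If one prefers to avoid Perron--Frobenius, it suffices to note the following. For any unit eigenvector $y$ of $\lambda_1(H')$, the vector $x=(|y_i|)$ is also a unit vector, and $x^TA(H')x\ge y^TA(H')y=\lambda_1(H')$. By maximality this is an equality, so $x$ is itself an admissible nonnegative maximizer. With that observation the argument is routine.
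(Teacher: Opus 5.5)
The paper does not prove this lemma; it cites it from Cvetkovi\'c--Doob--Sachs, where it is a standard consequence of Perron--Frobenius theory (the spectral radius of a nonnegative matrix does not decrease when entries are increased). Your argument is correct and gives a self-contained alternative. With your absolute-value observation it needs only the Rayleigh-quotient case $r=1$ of Lemma~\ref{lm1}: $|y|$ is a nonnegative unit vector attaining $\lambda_1(H')$, and for a nonnegative test vector, adding the missing edges of $G$ can only increase $x^TAx$. This is essentially the zero-padding device the paper itself uses in the proof of Lemma~\ref{lms1}. The extra nonnegativity is exactly what is needed because here $H$ need not be induced. Your Step~1 is dispensable: pad a nonnegative unit maximizer for $A(H)$ with zeros on $V(G)\setminus V(H)$ directly, and you no longer need the remark that $\lambda_1(H)\ge 0$. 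The citation buys the paper brevity, while your route buys independence from Perron--Frobenius theory.
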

 
An \emph{internal path} of a graph is a sequence of adjacent vertices
whose internal vertices have degree $2$ and whose two end-vertices
have degree greater than $2$; the two end-vertices are allowed to
coincide. Subdividing an edge of an internal path means inserting a
new vertex of degree $2$ into that edge.
The following is the Hoffman--Smith subdivision theorem.
\begin{lemma} {$($Hoffman and Smith’s subdivision theorem$)$\rm \cite{HS}} \label{lm8}
 Let $G$ be a graph with an internal path, and let $G^{\prime}$ be the graph obtained from $G$ by subdividing an edge on that path. If $G\ncong W_n$, then $\lambda_1(G^{\prime})<\lambda_1(G)$.
 \end{lemma}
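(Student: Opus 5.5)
The plan is a Rayleigh-quotient argument driven by the Perron vector of the \emph{subdivided} graph. I first reduce to $\lambda_1(G')>2$, and then build from that Perron vector a test vector on $G$ whose quotient is at least $\lambda_1(G')$. Since $\lambda_1$ of a disconnected graph is that of a component, I work in the component containing the internal path and assume $G$ connected. An internal path has end-vertices of degree $>2$, so $G$ is not a cycle or a path.

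\textbf{Step 1: the case $\lambda_1(G')\le 2$.} Then $G'$ is a Smith graph, i.e.\ a subgraph of one of $C_n$, $W_n$, $T_{2,2,2}$, $T_{1,3,3}$, $T_{1,2,5}$. Among these, only $W_n$ and its relatives contain an internal path joining two vertices of degree $\ge 3$. I expect that checking the list shows $G'\cong W_{n+1}$ and $G\cong W_n$, which is the excluded case. So from now on $\lambda':=\lambda_1(G')>2$. Let $x>0$ be the Perron vector of $G'$.

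\textbf{Step 2: the path recurrence.} Let $u=w_0,w_1,\dots,w_{s+1}=v$ be the internal path in $G'$, with $s\ge1$, and write $x_i=x_{w_i}$. For $1\le i\le s$,
\[
x_{i-1}+x_{i+1}=\lambda' x_i .
\]
Because $\lambda'>2$, the sequence $(x_i)$ is strictly convex along the path. Recovering $G$ from $G'$ amounts to contracting one edge $w_jw_{j+1}$ of the path, and any edge will do, since all such contractions are isomorphic to $G$.

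\textbf{Step 3: the test vector.} Contract $w_jw_{j+1}$ into a single vertex $z$, keep all other coordinates of $x$, and give $z$ a value $t$. Put $a=x_j$, $b=x_{j+1}$, $p=x_{j-1}$, $q=x_{j+2}$. If both $w_j,w_{j+1}$ are interior, the recurrence gives $p=\lambda'a-b$ and $q=\lambda'b-a$. A direct computation then shows that the test vector $y$ satisfies $y^TA(G)y\ge\lambda'\|y\|^2$ if and only if
\[
2t(\lambda'-1)(a+b)+2ab\ \ge\ \lambda'(t^2+a^2+b^2).
\]
When $a=b=t$ this reads $3\lambda' t^2\le(4\lambda'-2)t^2$, i.e.\ $\lambda'\ge 2$, and it is strict for $\lambda'>2$. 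So I choose $j$ at the bottom of the convex sequence, where $a$ and $b$ are as close as possible, and take $t$ to be the maximizer $t=\frac{(\lambda'-1)(a+b)}{\lambda'}$. Then $\lambda_1(G)\ge R(y)>\lambda'$, where the strict inequality comes from $\lambda'>2$ (or from $y$ not being an eigenvector of $G$).

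\textbf{Main obstacle.} The difficulty is the boundary behaviour. If $a\ne b$ the inequality need not hold for an arbitrary edge, so I must show that the minimizing edge makes the discriminant condition hold. My first attempt would use the explicit solution $x_i=\alpha r^i+\beta r^{-i}$ with $r+r^{-1}=\lambda'$. This allows the two coordinates at the chosen edge to be compared directly, possibly after averaging over the two candidate edges at the minimum.

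The short-path case $s=1$ needs separate treatment, since there the contracted edge involves an end-vertex $u$ or $v$ of degree $\ge3$. The recurrence then fails at $u$, and I instead use the eigen-equation at $u$. Failing that, I would fall back on the classical walk-counting and characteristic-polynomial argument of Hoffman and Smith for this case.
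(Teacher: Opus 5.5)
The paper does not prove this lemma; it quotes it from \cite{HS}. So your argument has to stand on its own, and Step 3 has a genuine gap.

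First, the end vertices are not the real difficulty. At $u$ the eigen-equation gives $\sigma_u:=\sum_{y\in N(u)\setminus\{w_1\}}x_y=\lambda'x_0-x_1$, and $\sigma_u$ plays exactly the role of $p$. So contracting \emph{any} edge $w_jw_{j+1}$ with $0\le j\le s$ leads to your inequality. After optimizing $t$ it becomes
\[
\frac ab+\frac ba\le \frac{2(\lambda'^2-\lambda'+1)}{2\lambda'-1}.
\]
Suppose the convex profile $x_i=C\cosh((i-m)\theta)$, with $\lambda'=2\cosh\theta$, has its minimum at some $m\in[0,s+1]$. Then the edge with $j\le m\le j+1$ has $a/b\in[2/\lambda',\lambda'/2]$. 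The inequality holds strictly there because $(\lambda'-2)(2\lambda'^2+\lambda'-2)>0$, so that part of your plan can be completed.

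But the minimum can lie outside the path, and then \emph{no} edge works. Let $G$ be the double star whose adjacent centres $u,v$ carry $2$ and $20$ pendant vertices, and subdivide $uv$ by $w$. Then $\lambda'^2=12+\sqrt{82}$, so $\lambda'\approx4.589$, and
\[
x_u:x_w:x_v=1:(\lambda'-2/\lambda'):(\lambda'^2-3)\approx1:4.153:18.06.
\]
The right-hand side above is about $4.27$, while $a/b+b/a$ is about $4.39$ for $uw$ and $4.58$ for $wv$. So neither edge, with any $t$, gives $R(y)\ge\lambda'$, although the theorem is true for this graph.

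The reason is structural. Step 3 never uses $d(u),d(v)\ge3$. Yet for a pendant path, where subdivision \emph{increases} $\lambda_1$, the Perron profile is also monotone along the path. So the degree hypothesis must be used exactly in the monotone case. The example shows it cannot enter through the eigen-equations at the two ends of a single contracted edge; the test vector has to be changed away from the path.

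For instance, suppose the path is a bridge and $x_u<x_{w_1}$. Multiply $x$ on the whole branch at $u$ by $x_{w_1}/x_u$, and give the $i$-th interior vertex of the shortened path the value $x_{w_{i+1}}$. Then $A(G)y-\lambda'y$ is supported at $u$, and
\[
y^T(A(G)-\lambda' I)y=\frac{x_{w_1}}{x_u}\bigl(x_ux_{w_2}-x_{w_1}^2\bigr)=\frac{x_{w_1}}{x_u}\bigl(\sigma_ux_{w_1}-x_u^2\bigr).
\]
This is positive because $2x_u/\lambda'\le\sigma_u<(\lambda'-1)x_u$: the expression is concave in $\sigma_u$, and both endpoint values are positive. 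The lower bound on $\sigma_u$ is where $d(u)\ge3$ enters.

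When the path lies on a cycle, no such rescaling exists. For that case your proposal only offers to fall back on Hoffman and Smith's own argument, which amounts to citing the theorem.

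One smaller point: the strict inequality requires $G$ to be connected, as in \cite{HS}. If another component has larger spectral radius, subdividing changes nothing. So connectedness is a hypothesis you need, not a reduction you can make.
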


We next recall the spectra of paths and cycles, together with a
standard estimate for the cosine function.
 \begin{lemma}  {\rm \cite{BOOK}} \label{lm2}
 The eigenvalues of path graph $P_n$ are
 $2\cos\Big(\frac{i\,\pi}{n+1}\Big)$ with corresponding eigenvector
 $$\left(\sin\Big(\frac{i\,\pi}{n+1}\Big),\,\sin\Big(\frac{2i\,\pi}{n+1}\Big),\ldots,\sin\Big(\frac{ni\,\pi}{n+1}\Big)\right)^T,~~i=1,\,2,\,3,\ldots,\,n.$$
 \end{lemma}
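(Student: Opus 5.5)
This is a classical fact, and I will verify it directly by checking the eigenvalue equation $A(P_n)x=\mu x$ entrywise for $\mu=2\cos\theta$ with $\theta=\frac{i\pi}{n+1}$. The candidate eigenvector is $x=(x_1,\ldots,x_n)^T$ with $x_j=\sin(j\theta)$.

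The adjacency matrix of $P_n=v_1v_2\cdots v_n$ is tridiagonal with ones on the off-diagonals. So for an interior index $2\le j\le n-1$ the $j$-th entry of $A x$ is $x_{j-1}+x_{j+1}$. The sum-to-product identity gives
\[
\sin((j-1)\theta)+\sin((j+1)\theta)=2\cos\theta\,\sin(j\theta).
\]
This is exactly $\mu x_j$.

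The two boundary rows are handled by extending the formula to $x_0=\sin 0=0$ and $x_{n+1}=\sin((n+1)\theta)=\sin(i\pi)=0$. Since these phantom entries vanish, the first row of $Ax$ equals $x_2=x_0+x_2$ and the last row equals $x_{n-1}=x_{n-1}+x_{n+1}$. The identity above therefore also holds for $j=1$ and $j=n$. The choice $\theta=\frac{i\pi}{n+1}$ is precisely what makes $x_{n+1}$ vanish, and this boundary step is the only point needing care.

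It remains to check that $x$ is nonzero and that all $n$ eigenvalues arise. For $1\le i\le n$ we have $0<\theta<\pi$, so $x_1=\sin\theta\neq 0$ and $x$ is nonzero. As $i$ runs from $1$ to $n$, $\theta$ takes $n$ distinct values in $(0,\pi)$. Cosine is strictly decreasing there, so the values $2\cos\theta$ are pairwise distinct. They are therefore $n$ distinct eigenvalues of the $n\times n$ matrix $A(P_n)$, which is the entire spectrum, each with the stated eigenvector.
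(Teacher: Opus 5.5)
Your verification is correct and complete: the sum-to-product identity handles the interior rows, the phantom entries $\sin 0=\sin(i\pi)=0$ handle the two boundary rows, and the $n$ values $2\cos\frac{i\pi}{n+1}$ are distinct, so they make up the whole spectrum. The paper gives no proof of its own and only cites \cite{BOOK}, so your self-contained argument is the standard one; the distinctness you establish also justifies, since $A(P_n)$ is symmetric, the orthogonality $\mathbf{x}_1\perp\cdots\perp\mathbf{x}_k$ that the paper uses without comment in Lemma~\ref{lms9}.
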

 
\begin{lemma}  {\rm \cite{BOOK}} \label{lm22}
 The eigenvalues of cycle graph $C_n$ are
 $2\cos\Big(\frac{2i\pi}{n}\Big)$, $i=1,\,2,\ldots,n$.
\end{lemma}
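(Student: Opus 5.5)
The statement to be proved is Lemma~\ref{lm22}: the spectrum of $C_n$ is $\{2\cos(2i\pi/n): 1\le i\le n\}$. The plan is to use the circulant structure of the adjacency matrix of $C_n$ and exhibit an explicit eigenbasis, in the same spirit as Lemma~\ref{lm2} for paths.

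Label the vertices of $C_n$ by $0,1,\ldots,n-1$, with $j\sim j\pm1 \pmod n$. The adjacency matrix is $A=P+P^{-1}$, where $P$ is the cyclic shift permutation matrix, $(Px)_j=x_{j+1}$ with indices mod $n$. Let $\omega=e^{2\pi\mathrm{i}/n}$, and for each $i=1,\ldots,n$ set $x^{(i)}=(1,\omega^{i},\omega^{2i},\ldots,\omega^{(n-1)i})^T$. Since $\omega^{ni}=1$, the indices wrap correctly and $Px^{(i)}=\omega^{i}x^{(i)}$. Therefore
\[
Ax^{(i)}=(\omega^{i}+\omega^{-i})\,x^{(i)}=2\cos\Big(\frac{2i\pi}{n}\Big)x^{(i)} .
\]
The vectors $x^{(1)},\ldots,x^{(n)}$ are the columns of the Vandermonde (discrete Fourier) matrix with distinct nodes $\omega^{i}$. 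Hence they are linearly independent and form a basis of $\mathbb{C}^n$, so they account for all $n$ eigenvalues with multiplicity.

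Because $A$ is real symmetric, its eigenvalues are real and the spectrum over $\mathbb{C}$ coincides with the real spectrum. If real eigenvectors are wanted, take $\mathrm{Re}\,x^{(i)}$ and $\mathrm{Im}\,x^{(i)}$, i.e.\ the vectors with entries $\cos(2\pi ij/n)$ and $\sin(2\pi ij/n)$; this pairs the eigenvalues for $i$ and $n-i$. The only point needing care is the multiplicity count, namely checking that the $n$ listed values give the full spectrum. The independence of the Fourier vectors handles this, so I expect no real obstacle.
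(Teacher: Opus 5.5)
Your proof is correct. Writing $A(C_n)=P+P^{-1}$ with $P$ the cyclic shift (valid for $n\ge 3$), the Fourier vectors are eigenvectors with eigenvalues $\omega^{i}+\omega^{-i}=2\cos(2i\pi/n)$, and Vandermonde independence settles the multiplicities. The paper gives no proof of its own and only cites \cite{BOOK}, where this circulant (roots-of-unity) argument is the standard derivation, so your route is essentially the intended one.
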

 
 \begin{lemma}  {\rm \cite{WS}}  \label{lm5}
For  all \(x \in (0,1)\), 
$$1 - \frac{1}{2}x^2 \le \cos x \le 1 - \frac{1}{2}x^2 + \frac{1}{24}x^4.$$
  \end{lemma}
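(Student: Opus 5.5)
The statement just before this point is Lemma~\ref{lm5}: for $x\in(0,1)$,
\[
1-\tfrac12x^2\le\cos x\le 1-\tfrac12x^2+\tfrac1{24}x^4 .
\]
I will prove it with the standard Taylor/monotonicity argument. I compare $\cos$ with its Maclaurin polynomials by repeated integration from $0$, starting from the trivial bound $\cos t\le 1$. Each integration over $[0,x]$ with $x>0$ preserves inequalities and increases the degree by one.

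\textbf{Step 1.} Set $f(x)=\cos x-1+\tfrac12x^2$. Then $f(0)=0$ and $f'(x)=x-\sin x$. Since $g(x)=x-\sin x$ has $g(0)=0$ and $g'(x)=1-\cos x\ge 0$, we get $g(x)\ge 0$ for $x\ge 0$. Hence $f$ is nondecreasing on $[0,\infty)$, so $f(x)\ge f(0)=0$. This is the lower bound.

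\textbf{Step 2.} Set $h(x)=1-\tfrac12x^2+\tfrac1{24}x^4-\cos x$. Then $h(0)=0$ and
\[
h'(x)=-x+\tfrac16x^3+\sin x,\qquad h''(x)=-1+\tfrac12x^2+\cos x=f(x)\ge 0
\]
by Step 1. Since $h'(0)=0$ and $h'$ is nondecreasing, $h'\ge 0$ on $[0,\infty)$. Then, since $h(0)=0$, also $h\ge 0$. This is the upper bound.

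Both inequalities in fact hold for all real $x$ by evenness, so the restriction to $(0,1)$ is harmless. There is no real obstacle here. The only point needing care is the ordering of the steps: the lower bound must be proved first, because Step 2 uses it through $h''=f$.
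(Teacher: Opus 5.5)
Your proof is correct. Step 1 gives $f\ge 0$ on $[0,\infty)$. Since $h''=f$ with $h(0)=h'(0)=0$, two integrations then give $h\ge 0$, and evenness extends both bounds to all real $x$.

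The paper gives no argument for this lemma; it simply quotes it from Wang and So \cite{WS}. Your derivation is therefore a self-contained replacement rather than a variant of the paper's route. It also shows that the hypothesis $x\in(0,1)$ is not needed. This costs nothing in the paper's applications, which only use $x=\pi/n$ (Step 3 of Theorem~\ref{smlm6}) and $x=\pi/(n+1)$ (Claim 2 of Lemma~\ref{lms10}), both of which lie in $(0,1)$.

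An equally short alternative is Taylor's theorem with Lagrange remainder: $\cos x=1-\tfrac12x^2+\tfrac{\cos\xi}{24}x^4$ for some $\xi$ between $0$ and $x$. The upper bound follows from $\cos\xi\le 1$. For $x\in(0,1)$, the lower bound follows from $\cos\xi>0$, which is one natural reason to state the lemma on a bounded interval. Your monotonicity argument needs no such restriction.
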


We finally recall several structural results concerning graphs with
small spectral radius.

 \begin{lemma}  {\rm \cite{SMITH}}  \label{lm4}
 Let $G$ be a connected graph with order $n\geq 10$. Then\\
 (i) $\lambda_1(G)<2$ if and only if $G\in \{P_n,\,Y_n\}$;\\
  (ii) $\lambda_1(G)=2$ if and only if $G\in \{C_n,\,W_n\}$.
  \end{lemma}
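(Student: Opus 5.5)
The statement to prove is Smith's classification (Lemma~\ref{lm4}): for a connected graph $G$ of order $n\ge 10$, $\lambda_1(G)<2$ exactly when $G\in\{P_n,Y_n\}$, and $\lambda_1(G)=2$ exactly when $G\in\{C_n,W_n\}$. I plan to show that the graphs listed have the claimed spectral radius, and then to exclude every other connected graph by finding a subgraph with $\lambda_1\ge 2$ or $>2$ and applying Lemma~\ref{lm3}.

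\textbf{Step 1: the listed graphs.} By Lemmas~\ref{lm2} and~\ref{lm22}, $\lambda_1(P_n)=2\cos\frac{\pi}{n+1}<2$ and $\lambda_1(C_n)=2$. For the others I will write down explicit positive vectors.
\begin{itemize}
\item For $W_n$, put weight $1$ on each of the four leaves and weight $2$ on every other vertex. Then $Ax=2x$ at every vertex, and Perron--Frobenius gives $\lambda_1(W_n)=2$.
\item For $Y_n$, first note that it is a proper subgraph of $W_{n+1}$. Since $W_{n+1}$ is connected, Perron--Frobenius gives strict monotonicity, so $\lambda_1(Y_n)<\lambda_1(W_{n+1})=2$.
\end{itemize}

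\textbf{Step 2: forbidden subgraphs.} The obstructions are the Smith graphs, each with $\lambda_1=2$: the cycles $C_t$, the star $K_{1,4}$, the trees $W_t$ ($t\ge 6$), and $T_{2,2,2}$, $T_{1,3,3}$, $T_{1,2,5}$. For each I exhibit the standard positive eigenvector for eigenvalue $2$. For a $T_{a,b,c}$ this means weights $3,2,1$ on the legs of $T_{2,2,2}$, and the corresponding linear weights for the other two trees. Every proper connected supergraph of one of these then has $\lambda_1>2$, again by Perron--Frobenius strict monotonicity.

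\textbf{Step 3: case analysis for a connected $G$ with $\lambda_1(G)\le 2$.}
\begin{itemize}
\item \emph{$G$ contains a cycle.} Then $G$ contains some $C_t$. If $G\ne C_t$, connectivity gives a proper connected supergraph of $C_t$ inside $G$, so $\lambda_1>2$. Hence $G=C_n$.
\item \emph{$G$ is a tree.}
  \begin{itemize}
  \item $\Delta\le 3$, since otherwise $G\supseteq K_{1,4}$.
  \item If $G$ has two vertices of degree $3$, the path between them together with two pendant neighbours at each end contains some $W_t$. If $G\ne W_t$, we get $\lambda_1>2$, so $G=W_n$.
  \item If $G$ has no vertex of degree $3$, then $G=P_n$.
  \item Otherwise $G=T_{a,b,c}$ with $a\le b\le c$ and $a+b+c=n-1\ge 9$. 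To avoid $T_{2,2,2}$ we need $a=1$. To avoid $T_{1,3,3}$ we need $b\le 2$. If $b=2$, then $c\ge 6$, so $G$ properly contains $T_{1,2,5}$, a contradiction. So $b=1$ and $G=T_{1,1,n-3}\cong Y_n$.
  \end{itemize}
\end{itemize}
Combining this with Step~1 separates the graphs with $\lambda_1<2$ from those with $\lambda_1=2$. The hypothesis $n\ge 10$ is what removes the small exceptions $T_{1,2,c}$ with $c\le 4$, $T_{1,3,3}$, $T_{1,2,5}$, and so on.

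\textbf{Main obstacle.} The delicate step is the order argument in the $T_{a,b,c}$ case: confirming that $n\ge 10$ excludes exactly the small exceptional trees. The same argument must also rule out the case of two degree-$3$ vertices with a short path between them, which is where the degenerate situation of $W_t$ for small $t$ appears. Checking these boundary orders is where I would concentrate.
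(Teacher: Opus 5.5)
Your proof is correct. It is the classical proof of Smith's theorem. The paper does not prove this lemma; it only cites \cite{SMITH}.

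Your route uses explicit positive eigenvalue-$2$ vectors for $C_t$, $K_{1,4}$, $W_t$, $T_{2,2,2}$, $T_{1,3,3}$ and $T_{1,2,5}$ as minimal obstructions. It then applies Perron--Frobenius strict monotonicity. This makes the lemma self-contained. I checked the eigenvectors, the embedding $Y_n\subset W_{n+1}$, and each branch of the tree case, and they all go through.

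Two small remarks:
\begin{itemize}
\item The strict inequality you rely on throughout is not Lemma~\ref{lm3} as stated in the paper, which only gives $\lambda_1(H)\le\lambda_1(G)$. You should invoke the strict form explicitly: $\lambda_1(H)<\lambda_1(G)$ for a proper subgraph $H$ of a connected graph $G$.
\item The ``main obstacle'' you flag is not actually delicate. Since $n\ge 10$ exceeds the orders $5,7,8,9$ of $K_{1,4}$, $T_{2,2,2}$, $T_{1,3,3}$ and $T_{1,2,5}$, every such containment is automatically proper. Two adjacent degree-$3$ vertices simply give $W_6$, and your vector (weight $1$ on the leaves, $2$ elsewhere) already handles that case.
\end{itemize}
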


 \begin{lemma} {\rm \cite{BN,CDG}} \label{lm9}
 Let $G$ be a connected graph with order $n\geq 18$. Then $\lambda_1(G)\in (2,\,\eta]$ if and only if 
 $$G\in \left\{T_{1,\,a,\,n-a-2},\,T_{2,\,2,\,n-5},\,P_{1,\,1,\,n-2}^{b,\,b+c}\right\},$$
 where $a\in [2,\,n-4]$, $b,\,c\in [1,\,n-4]$ and $(b,\,c)\neq (1,\,n-4)$ with the following condition 
 \[
2c\geq 
\begin{cases}
n-4 & \mbox{ for } b=1, \\
n-2 & \mbox{ for } b=2,\\
n-1 & \mbox{ for } b>2.
\end{cases}
\]
 \end{lemma}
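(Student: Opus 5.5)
The statement is the classical Cvetkovi\'c--Doob--Gutman and Brouwer--Neumaier classification, which the paper only cites. If I had to reprove it, I would reduce in turn to trees, to maximum degree $3$, and to at most two branch vertices, and finish with an explicit characteristic-polynomial computation. Throughout, I use Lemma~\ref{lm3} (monotonicity of $\lambda_1$ under subgraphs) and Lemma~\ref{lm8} (subdivision of an internal path strictly decreases $\lambda_1$).

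\textbf{Step 1: $G$ is a tree.} Suppose $G$ contains a cycle $C_m$. Since $G\ne C_n$ and $G$ is connected, $G$ contains $U_m$ as a subgraph. In $U_m$ the cycle is an internal path whose two ends coincide at the attachment vertex. Subdividing it decreases $\lambda_1$ by Lemma~\ref{lm8}, so $\lambda_1(U_m)>\lim_t\lambda_1(U_t)$. I would compute this limit with the substitution $\lambda=x+x^{-1}$, obtaining $3/\sqrt2\approx2.121>\eta$. This contradicts $\lambda_1(G)\le\eta$, so $G$ is a tree.

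\textbf{Step 2: $\Delta\le3$.} A vertex of degree at least $4$ gives $K_{1,4}$, which has $\lambda_1=2$. Since $n\ge18$ and $G\ne K_{1,4}$, connectivity forces either $K_{1,5}$ or $K_{1,4}$ with one arm extended to length $2$. I would check directly that both have spectral radius larger than $\eta$.

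\textbf{Step 3: at most two vertices of degree $3$.} A tree with three branch vertices contains a quipu with three legs attached along a path. Lemma~\ref{lm8} lets me push the internal paths to their limiting lengths, and I would show the limiting spectral radius already exceeds $\eta$. This leaves two shapes.
\begin{itemize}
\item \emph{One branch vertex, $T_{a,b,c}$.} The Smith graphs (Lemma~\ref{lm4}) are excluded by $\lambda_1>2$, namely $T_{1,1,c}$, $T_{1,2,\le4}$, $T_{2,2,2}$ and the tree $T_{1,2,5}$ (which has $\lambda_1=2$). For the rest I would compare with the limits $\lambda_1(T_{1,\infty,\infty})=\eta$ and $\lambda_1(T_{2,2,\infty})=\eta$, giving $T_{1,a,b}$ and $T_{2,2,c}$, and check that $T_{2,3,3}$ already exceeds $\eta$.
\item \emph{Two branch vertices, each carrying a leg of length $1$.} This gives $P_{1,1,n-2}^{b,b+c}$. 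A leg of length at least $2$ at either branch vertex would contain a subgraph beating $\eta$, again by a limiting argument.
\end{itemize}

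\textbf{Step 4 (the main obstacle).} The hard part is the exact inequality $2c\ge n-4,\ n-2,\ n-1$ for $b=1$, $b=2$, $b>2$. Here Lemma~\ref{lm8} alone does not suffice, because both $c$ and the end lengths vary. My plan is to write the characteristic polynomial of $P^{b,b+c}_{1,1,n-2}$ via path recursions. Substituting $\lambda=x+x^{-1}$ turns path polynomials into $(x^{k+1}-x^{-k-1})/(x-x^{-1})$. I would then evaluate the sign of the polynomial at $\lambda=\eta$, that is, at the $x>1$ with $x+x^{-1}=\eta$. The condition $\lambda_1\le\eta$ becomes a sign condition on an explicit Laurent polynomial in $x$, and comparing the dominant powers of $x$ should yield exactly the stated linear inequalities in $b,c,n$. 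The exceptional pair $(b,c)=(1,n-4)$ and the boundary cases $b=1,2$ would be checked separately.
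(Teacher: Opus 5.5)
\textbf{Overall.} The paper does not prove this lemma; it only cites Cvetkovi\'c--Doob--Gutman and Brouwer--Neumaier. Your reduction (trees, then $\Delta\le3$, then at most two branch vertices, then an exact computation) is therefore the standard route rather than the paper's. Steps 1--3 contain false claims that can be repaired. Step 4 has a genuine gap, and it cannot be closed as you intend because the statement itself is off at the boundary.

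\textbf{Repairable errors in Steps 1--3.}
\begin{itemize}
\item The limit $\lim_t\lambda_1(U_t)$ is $\eta$, not $3/\sqrt2$; the value $3/\sqrt2$ is the limit for $T_{t,t,t}$. The paper itself derives $\lim_t\lambda_1(U_t)=\eta$ in Step~1 of the proof of Theorem~\ref{smlm6}.
\item For three branch vertices along a path, the limiting value is also exactly $\eta$, because the middle branch vertex sees $T_{1,\infty,\infty}$.
\item A branch vertex with two legs of length at least $2$ leads to $T_{2,2,\infty}$, again with limit $\eta$.
\end{itemize}
So none of these limits ``already exceeds'' $\eta$. Your conclusions survive only because Lemma~\ref{lm8} gives strict decrease, so you must argue $\lambda_1(H)>\lambda_1(H')\ge\lim=\eta$.

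Separately, $\lambda_1(T_{2,3,3})\approx2.053<\eta$, so $T_{2,3,3}$ cannot serve as the forbidden subgraph. Use $T_{2,3,4}$ instead, with $\lambda_1\approx2.064$. It is available because $n\ge18$ forces some arm of length at least $6$.

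\textbf{The gap in Step 4.} Put $d=n-3-b-c\ge1$, the length of the far end segment. At $\lambda=\eta$ one has $x^2=\phi=(1+\sqrt5)/2$, and carrying out your sign computation exactly gives
\[
\lambda_1\bigl(P^{b,b+c}_{1,1,n-2}\bigr)\le\eta\iff(\phi^{b+1}-2)(\phi^{d+1}-2)\le\phi^{c}.
\]
Comparing leading powers gives $c\ge d$ for $b=1$, $c\ge d+3$ for $b=2$, and $c\ge b+d+2$ for $b,d\ge3$. These are the stated inequalities.

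The lower-order terms still matter. For $b,d\ge3$, the value $c=b+d+1$ also satisfies the criterion exactly when $\phi^{b+d}\le2\phi^{b+1}+2\phi^{d+1}-4$. This happens precisely for $(b,d)\in\{(3,3),(3,4),(4,3)\}$.

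\textbf{Consequences for the statement.}
\begin{itemize}
\item The case $(b,c,d)=(3,8,4)$ has order $18$. The tree $P^{3,11}_{1,1,16}$ has $2<\lambda_1<\eta$ (the left side divided by $\phi^{8}$ is about $0.94$). Yet it violates $2c\ge n-1$ in both orientations.
\item The pair $(b,c)=(1,n-5)$ gives $W_n$, which has $\lambda_1=2$ and still satisfies $2c\ge n-4$.
\item The excluded pair $(1,n-4)$ is vacuous, because $b+c\le n-4$ by the definition of the quipu. So ``checking it separately'' misses the $W_n$ case.
\end{itemize}
The lemma as stated is therefore false at $n=18$ and for $W_n$. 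No argument can deliver ``exactly the stated inequalities''.

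\textbf{What a correct proof needs.}
\begin{itemize}
\item The exact criterion above.
\item The reversal symmetry $b\leftrightarrow d$. The stated conditions are asymmetric, so trees with $b\ge3$ and $d\le2$ appear in the list only through their reversal.
\item Either the hypothesis $n\ge19$ or the sporadic tree $P^{3,11}_{1,1,16}$ added to the list.
\item The exclusion $(1,n-5)$ in place of $(1,n-4)$.
\end{itemize}
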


 \section{Characterization of graphs with eigenvalue sum below $2k$}
 
 Throughout this section, $k\geq 2$ unless otherwise stated.
 The classical threshold $2$ for the adjacency spectral radius suggests
the additive threshold $2k$ for the sum of the $k$ largest eigenvalues. 
 In this section, we study connected graphs whose sum of the $k$
largest eigenvalues lies below the threshold $2k$.
We begin with the following observation.
 \begin{lemma} \label{lms1}Let $V_1,\ldots,V_k$ be nonempty, pairwise disjoint subsets of $V(G)$. Then 
 $$S_k(G) \geq \sum_{i=1}^k \lambda_1\left(G[V_i]\right).$$
 \end{lemma}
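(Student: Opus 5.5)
We apply Ky Fan's variational principle (Lemma~\ref{lm1}) to the adjacency matrix $A=A(G)$, using $k$ orthonormal test vectors with disjoint supports built from Perron vectors of the induced subgraphs $G[V_i]$. For each $i$, let $A_i$ be the adjacency matrix of $G[V_i]$. Note that $A_i$ is exactly the principal submatrix of $A$ indexed by $V_i$, since $G[V_i]$ is induced. Let $x_i\in\mathbb{R}^{|V_i|}$ be a unit eigenvector of $A_i$ for $\lambda_1(G[V_i])$. If $G[V_i]$ has no edges, then $\lambda_1=0$ and any unit vector will do.

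Define $u_i\in\mathbb{R}^n$ by setting $u_i(v)=x_i(v)$ for $v\in V_i$ and $u_i(v)=0$ otherwise. Since the sets $V_1,\ldots,V_k$ are pairwise disjoint, the vectors $u_i$ have disjoint supports. Hence $u_i^Tu_j=0$ for $i\neq j$, and each $u_i$ has unit length, so $u_1,\ldots,u_k$ are orthonormal.

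Because $u_i$ vanishes outside $V_i$, we have
\[
u_i^TAu_i=x_i^TA_ix_i=\lambda_1(G[V_i]).
\]
Lemma~\ref{lm1} then gives
\[
S_k(G)\ \geq\ \sum_{i=1}^k u_i^TAu_i=\sum_{i=1}^k\lambda_1(G[V_i]).
\]

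The only point needing care is that the subsets are nonempty, which guarantees that each $x_i$ exists and that $k\le n$, so $S_k(G)$ is defined. There is no substantive obstacle: the argument is a direct application of Ky Fan's principle together with the observation that disjoint supports give orthogonality.

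An equivalent route goes through interlacing. The induced subgraph $G[V_1\cup\cdots\cup V_k]$ contains the disjoint union $\bigcup_i G[V_i]$ as a spanning subgraph. One would first compare $S_k$ of that union with $\sum_i\lambda_1(G[V_i])$, and then apply Corollary~\ref{lm7}. However, the extra edges between different $V_i$ make this comparison no simpler than the direct Ky Fan argument, so I would use the direct argument.
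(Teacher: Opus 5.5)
Your proof is correct and is the paper's argument: you extend unit eigenvectors of $A(G[V_i])$ by zero, use disjoint supports to get orthonormality and $u_i^TA(G)u_i=\lambda_1(G[V_i])$, and then apply Ky Fan's principle (Lemma~\ref{lm1}). You were right to set aside the interlacing ``route'', which is not actually equivalent: $S_k$ is not monotone under adding edges (e.g.\ $S_2(K_3)=1<\sqrt2=S_2(P_3)$), so passing from $\bigcup_i G[V_i]$ to $G[V_1\cup\cdots\cup V_k]$ would itself require the Ky Fan argument.
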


 \begin{proof} Let ${\bf x_i}$ be the unit eigenvector of $G[V_i]$ corresponding to its spectral radius $\lambda_1(G[V_i])$, for each $i = 1, 2, \dots, k$. Then we have ${\bf x_i}^T A(G[V_i]) {\bf x_i} = \lambda_1(G[V_i])$. Extend each ${\bf x_i}$ to a vector of dimensions $n$ (say, ${\bf y_i}$) by setting all entries outside $V_i$ to zero. Then 
 $${\bf y_i}^T A(G){\bf y_i}={\bf x_i}^T A(G[V_i]) {\bf x_i}=\lambda_1(G[V_i]).$$
 Since the sets $V_i$ are pairwise disjoint, the vectors ${\bf y_1}, {\bf y_2}, \dots, {\bf y_k}$ are unit and pairwise orthogonal. 
 Combining the above results with Lemma \ref{lm1}, we get 
 $$S_k(G) \ge \sum_{i=1}^k \frac{{\bf y_i}^T A(G) {\bf y_i}}{{\bf y_i}^T {\bf y_i}} = \sum_{i=1}^k \lambda_1(G[V_i]).$$
 This completes the proof.
\end{proof}

The preceding lemma immediately yields the following useful form for
vertex-disjoint subgraphs. Corollary~\ref{lms2} will be used repeatedly throughout this section. It allows us to obtain lower bounds for $S_k(G)$ by finding suitable vertex-disjoint subgraphs of $G$.
 \begin{corollary} \label{lms2} Let $G$ be a graph  and let  $G_1,G_2,\ldots,G_k$ be vertex-disjoint subgraphs of $G$. Then $$S_k(G) \geq \sum_{i=1}^k \lambda_1(G_i).$$
\end{corollary}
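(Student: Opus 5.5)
The plan is to reduce Corollary~\ref{lms2} to Lemma~\ref{lms1} by passing from arbitrary subgraphs to induced subgraphs. For each $i=1,\ldots,k$ set $V_i=V(G_i)$. Since $G_1,\ldots,G_k$ are vertex-disjoint, the sets $V_1,\ldots,V_k$ are pairwise disjoint. We may also assume each $V_i$ is nonempty, since a subgraph of a graph is understood to have at least one vertex. Lemma~\ref{lms1} then gives
\[
S_k(G)\geq \sum_{i=1}^k \lambda_1\bigl(G[V_i]\bigr).
\]

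It remains to compare each term with $\lambda_1(G_i)$. The graph $G_i$ has vertex set $V_i$, and every edge of $G_i$ is an edge of $G$ joining two vertices of $V_i$. Hence $G_i$ is a (spanning) subgraph of the induced subgraph $G[V_i]$. By Lemma~\ref{lm3}, $\lambda_1(G_i)\leq \lambda_1(G[V_i])$ for every $i$. Summing these inequalities over $i$ and combining with the display above yields $S_k(G)\geq\sum_{i=1}^k\lambda_1(G_i)$, as required.

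There is no real obstacle here. The only point that needs care is that Lemma~\ref{lms1} is stated for induced subgraphs, so the monotonicity of the spectral radius under taking subgraphs (Lemma~\ref{lm3}) must be applied to move from $G_i$ up to $G[V_i]$.

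Alternatively, one could argue directly through Ky Fan's principle (Lemma~\ref{lm1}). Let ${\bf x_i}$ be a nonnegative unit Perron eigenvector of $A(G_i)$, extended by zeros to a vector ${\bf y_i}$ of length $n$. Because $A(G[V_i])$ dominates $A(G_i)$ entrywise and ${\bf x_i}\geq 0$, we get ${\bf y_i}^TA(G){\bf y_i}\geq \lambda_1(G_i)$. The extended vectors ${\bf y_1},\ldots,{\bf y_k}$ are orthonormal since their supports are disjoint. Ky Fan's principle then gives the same conclusion.
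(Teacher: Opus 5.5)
Your proposal is correct and is the paper's proof: apply Lemma~\ref{lms1} with $V_i=V(G_i)$, then use Lemma~\ref{lm3} to get $\lambda_1(G[V_i])\geq\lambda_1(G_i)$, since $G_i$ is a spanning subgraph of $G[V_i]$. Your alternative argument, using nonnegative Perron vectors directly in Ky Fan's principle, is also valid but not needed.
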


 \begin{proof} Since $G_1,G_2,\ldots,G_k$ be vertex-disjoint subgraphs of $G$, then $V(G_1),V(G_2),\ldots,V(G_k)$ are pairwise disjoint.
By taking $V_i=V(G_i)$ in Lemma \ref{lms1}, we get 
$$S_k(G) \geq \sum_{i=1}^k \lambda_1(G[V(G_i)]).$$
Moreover, each $G_i$ is a subgraph of $G[V(G_i)]$. Then by 
Lemma \ref{lm3}, it follows that $\lambda_1(G[V(G_i)])\geq \lambda_1(G_i)$. Combining the above results gives the desired result.
\end{proof}

We first record two elementary consequences that will be used to find
such vertex-disjoint subgraphs. The first controls graphs of large
maximum degree, while the second guarantees the existence of a long
path when the maximum degree is bounded.
 \begin{lemma} \label{lms3} Let $G$ be a connected graph of order $n$ with the maximum degree $\Delta$. If $n\geq (2k+1)^2+k$ and $\Delta \geq (2k+1)^2$, then $S_k(G)\geq 2k+1$.
 \end{lemma}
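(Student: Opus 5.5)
The plan is to apply Corollary~\ref{lms2} to $k$ vertex-disjoint subgraphs: one star centered at a vertex of maximum degree, plus $k-1$ single edges. A star $K_{1,s}$ has spectral radius $\sqrt{s}$, and each edge contributes $\lambda_1(K_2)=1$. So it suffices to find a star $K_{1,s}$ with $\sqrt{s}\ge k+2$, i.e. $s\ge (k+2)^2$, together with $k-1$ edges disjoint from it and from each other. This gives
$$S_k(G)\ge \sqrt{s}+(k-1)\ge k+2+k-1=2k+1.$$
The hypothesis $\Delta\ge (2k+1)^2$ is much more generous than $(k+2)^2$. That slack is what lets us sacrifice some leaves of the star when we build the disjoint edges.

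Concretely, I would proceed in three steps.

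\textbf{Step 1.} Let $v$ have degree $\Delta$ and write $N=N(v)$. Since $n\ge (2k+1)^2+k>\Delta+1$, the set $R=V(G)\setminus N[v]$ has at least $n-\Delta-1$ vertices. I would build the $k-1$ edges greedily. Because $G$ is connected, as long as vertices remain outside the current configuration there is an edge leaving it.

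\textbf{Step 2.} A cleaner way to organize Step~1 is to choose the edges one at a time. Each new edge should use at most one vertex of $N$ and otherwise use vertices outside $N[v]$. There are two cases:
\begin{itemize}
\item If $R$ contains an edge not touching previously used vertices, take it; this costs nothing from the star.
\item Otherwise, connectivity gives an edge $xy$ with $x\in N$ and $y\in R$ unused. Take $xy$, which removes only the leaf $x$ from the star.
\end{itemize}
Since there are only $k-1$ edges, the star keeps at least $\Delta-(k-1)\ge (2k+1)^2-k+1\ge (k+2)^2$ leaves. If $R$ is nonempty but too small to supply all $k-1$ edges in this way, I would instead take edges $xx'$ inside $G[N]$, or pair leaves of $N$ with vertices of $R$. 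The count $n\ge \Delta+1+k-1$ (which follows from $n\ge(2k+1)^2+k$) together with connectivity should guarantee enough such edges, each costing at most two leaves. That totals at most $2(k-1)$ leaves, still leaving more than $(k+2)^2$.

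\textbf{Step 3.} The degenerate situation is when almost all of $V(G)$ lies in $N[v]$ and $N$ is independent. Then $R$ has at least $n-\Delta-1$ vertices, which may be fewer than $k-1$ pairs. In that case, every vertex of $R$ is joined to $N$ through a path. The fallback is to use edges of the form $xy$ with $x\in N$ and $y\in R$: we need $k-1$ distinct vertices $y$. If $|R|<k-1$, we can instead split $N[v]$ into several smaller stars, using the extra degree budget.

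Specifically, split $v$'s neighbors into the central star with $(k+2)^2$ leaves and use $k-1$ further edges of the form $x y$. When $|R|$ is small, the edges must come from outside, and we need $\Delta\ge (k+2)^2+2(k-1)$. This holds since $(2k+1)^2-(k+2)^2=3k^2-3\ge 2(k-1)$.

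I expect the main obstacle to be Step~3: the careful bookkeeping showing that the vertices outside $N[v]$ always supply $k-1$ disjoint edges. This is where the exact form $n\ge(2k+1)^2+k$ enters, namely $|R|\ge k-1$ whenever $\Delta$ is not too close to $n$. Where $R$ cannot supply enough edges, the margin in $\Delta$ absorbs the leaves lost by using $N$-to-$R$ edges.
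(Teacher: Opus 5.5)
Your proposal has a genuine gap. It needs $k-1$ pairwise disjoint edges that avoid a large star, and such edges need not exist.

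\textbf{A counterexample to the construction.} Take $G=K_{1,n-1}$ with $n-1\ge (2k+1)^2+k-1$; it satisfies both hypotheses. Every edge of $G$ contains the center. So any family of vertex-disjoint subgraphs contains at most one subgraph with an edge, and no configuration ``star plus $(k-1)K_2$'' exists for any $k\ge 2$.

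Near-stars fail too. Attach a pendant vertex $y$ to one leaf $x$ of $K_{1,\Delta}$, with $\Delta$ large. A star with at least $(k+2)^2$ leaves must be centered at $v$, and the only edge avoiding $v$ is $xy$. So for $k\ge 3$ you cannot find two such edges.

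Your Step~3 fallback (``split $N[v]$ into several smaller stars'') does not help. When $N(v)$ is independent, every subgraph of $G[N[v]]$ with an edge passes through $v$.

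The bookkeeping also rests on false inequalities. The hypothesis $n\ge(2k+1)^2+k$ gives neither $n>\Delta+1$ nor $n\ge\Delta+k$, because nothing prevents $\Delta=n-1$. Hence $R=V(G)\setminus N[v]$ may be empty.

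\textbf{The missing idea.} The other $k-1$ subgraphs do not need to contribute anything positive. Corollary~\ref{lms2} (via Lemma~\ref{lms1}) allows $G_i=K_1$ with $\lambda_1(K_1)=0$. Its test vector is a coordinate vector, orthogonal to the others, with Rayleigh quotient $0$. Since $\Delta\ge(2k+1)^2$, $G$ contains $K_{1,(2k+1)^2}$, whose spectral radius is exactly $\sqrt{(2k+1)^2}=2k+1$. This star uses $(2k+1)^2+1$ vertices, and $n\ge(2k+1)^2+k$ leaves $k-1$ further vertices to serve as singletons. Therefore $S_k(G)\ge (2k+1)+(k-1)\cdot 0=2k+1$.

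This is the paper's proof, and it accounts for both hypotheses exactly. The bound $\Delta\ge(2k+1)^2$ makes the star alone reach $2k+1$, and $n\ge(2k+1)^2+1+(k-1)$ supplies the singletons. Neither connectivity nor any edge-finding is needed.
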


 \begin{proof}Since $\Delta \geq (2k+1)^2$ and $n \geq (2k+1)^2+k$, $K_{1,(2k+1)^2}$ and $(k-1)K_1$ are vertex-disjoint subgraphs of $G$. By Corollary \ref{lms2}, we obtain $$S_k(G) \geq \lambda_1(K_{1,(2k+1)^2} ) + (k-1)\lambda_1(K_1) =2k+1.$$
\end{proof}

 \begin{lemma} \label{lms4} Let $G$ be a connected graph with order $n\geq 3$ and the maximum degree $\Delta$. Then $G$ contains a path  $P_t$ with $t>2\log_\Delta n-2$.
\end{lemma}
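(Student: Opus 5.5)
The plan is the standard BFS-tree counting argument. Fix any vertex $r$ and run breadth-first search from $r$, obtaining a spanning tree $T$ of $G$ rooted at $r$. Let $h$ be the height of $T$, i.e.\ the largest distance from $r$ to another vertex. A vertex $w$ at depth $h$ is joined to $r$ by a path in $T$ (hence in $G$) on $h+1$ vertices, so $G$ contains $P_{h+1}$. It therefore suffices to bound $h$ from below in terms of $n$ and $\Delta$.

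In $T$ the root has at most $\Delta$ children and every other vertex has at most $\Delta-1$ children. Hence depth $i$ contains at most $\Delta(\Delta-1)^{i-1}\le \Delta^i$ vertices. Summing over depths gives
\[
n\le 1+\Delta+\Delta^2+\cdots+\Delta^h=\frac{\Delta^{h+1}-1}{\Delta-1}.
\]
Since $n\ge 3$ and $G$ is connected, $\Delta\ge 2$. Then $\frac{\Delta^{h+1}-1}{\Delta-1}<\Delta^{h+1}$, because this is equivalent to $\Delta^{h+1}(\Delta-2)+1>0$, which holds for $\Delta\ge 2$. So $n<\Delta^{h+1}$, i.e.\ $h+1>\log_\Delta n$.

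Taking $t=h+1$, $G$ contains $P_t$ with $t>\log_\Delta n$. This already gives the claim, since $\log_\Delta n\ge 2\log_\Delta n-2$ holds exactly when $\log_\Delta n\le 2$. It is not automatic in general, because $n$ may exceed $\Delta^2$.

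The real task is therefore to obtain the factor $2$, and this is the step I expect to be the main obstacle. I would route the path through the root: choose $r$ to be a center of $T$, or more simply take a longest path and use the diameter. Let $D$ be the diameter of $G$. Every vertex lies within distance $\lceil D/2\rceil$ of a central vertex $c$ of a BFS tree, since in any tree the radius equals $\lceil \mathrm{diam}/2\rceil$ and tree distances are at least graph distances. Applying the counting above from $c$ with $h=\lceil D/2\rceil$ gives $n<\Delta^{\lceil D/2\rceil+1}$.

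To get a path of length comparable to $2h$ rather than $h$, I would argue directly in a BFS tree $T$ rooted at a center. The tree has two branches of depth at least $h-1$ from the center, and concatenating them gives a path in $T$ on at least $2h$ vertices. Combining this with $h+1>\log_\Delta n$ yields $t\ge 2h>2\log_\Delta n-2$. The care needed is in the precise choice: root $T$ at the center of $T$ itself, not of $G$, and use the tree radius $h$ together with the fact that the tree's diameter is $2h$ or $2h-1$. Hence $T$, and thus $G$, contains a path on at least $2h$ vertices. The inequality $n<\Delta^{h+1}$ still holds in $T$, since $T$ has maximum degree at most $\Delta$.
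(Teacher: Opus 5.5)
Your final argument is correct and is essentially the paper's proof: you take a spanning tree $T$ with center $c$ and radius $h$, count layers from $c$ to get $n<\Delta^{h+1}$, and use $\mathrm{diam}(T)\in\{2h-1,2h\}$ to get a path on at least $2h$ vertices, while the paper counts layers from the midpoint of a longest path of $T$ and uses $h\le t/2$. Drop the detour through the diameter $D$ of $G$, because its claim that every vertex lies within distance $\lceil D/2\rceil$ of a center is false in general (e.g.\ for $C_n$). Also, two branches of depth $h-1$ alone give only $2h-1$ vertices, so it is the diameter fact in your last paragraph that actually delivers $t\ge 2h$.
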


 \begin{proof} Since $G$ is connected, it contains a spanning tree, say $T$. It is clear that $d_T(v_i)\leq d_G(v_i)\leq \Delta$. Let $P_t$ be the longest path of $T$. Choose a vertex $v_1\in V(P_t)$ such that $v_1$ is the center of $P_t$, we define $L_i = \{u \mid u \in V(T) \text{ and } d_T(u, v_1) = i\}$ for $i\geq 1$ and $h= \max_{v_i \in V(T)} \{d_T(v_1, v_i)\}$. Then $h\leq \frac{t}{2}$ and $|L_i| \leq \Delta^i$ for $1 \leq i \leq h$. It follows that
 $$n = 1+\sum_{i=1}^h |L_i| \le \sum_{i=0}^h \Delta^i = \frac{\Delta^{h+1} - 1}{\Delta - 1} < \Delta^{h+1},$$
 which implies that $h>\log_\Delta n-1$. Thus $t\geq 2h>2\log_\Delta n-2$. This completes the proof.
\end{proof}
For $k\ge2$ and $0<\varepsilon\le1$, define
\[
 r_\varepsilon=
 \left\lceil\frac{\pi}{\arccos(1-\varepsilon/(k-1))}\right\rceil-1.
\]
One can easily see that $r_\varepsilon\geq 1$.
 \begin{lemma} \label{lms5.1} Let $G$ be a connected graph of order $n$ that contains a $q$-vertex subgraph $H$ with $q\geq 4$. If $\lambda_1(H) \geq a + 2\varepsilon$ and $n \ge (2k+1)^{q-2+(q+1)(k-1)r_\varepsilon}$ with $a\in [2,\,3]$ and
 $\varepsilon \in (0,1]$, then $S_k(G) \geq 2k - 2 + a.$
\end{lemma}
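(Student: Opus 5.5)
The plan is to apply Corollary~\ref{lms2} to $H$ together with $k-1$ vertex-disjoint paths, each disjoint from $H$ and each with spectral radius close to $2$. The parameter $r_\varepsilon$ is chosen exactly so that these paths can be $P_{r_\varepsilon}$. Write $r=r_\varepsilon$. From the definition, $r+1\geq \pi/\arccos(1-\varepsilon/(k-1))$, so $\pi/(r+1)\le \arccos(1-\varepsilon/(k-1))$. Lemma~\ref{lm2} then gives
\[
\lambda_1(P_r)=2\cos\Big(\frac{\pi}{r+1}\Big)\geq 2-\frac{2\varepsilon}{k-1}.
\]
If $H$ and $k-1$ copies of $P_r$ can be found vertex-disjointly in $G$, Corollary~\ref{lms2} yields
\[
S_k(G)\ge (a+2\varepsilon)+(k-1)\Big(2-\frac{2\varepsilon}{k-1}\Big)=2k-2+a .
\]
So the whole task is to find these paths.

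I would split into cases on $\Delta=\Delta(G)$. If $\Delta\ge (2k+1)^2$, Lemma~\ref{lms3} applies. Its order hypothesis $n\ge (2k+1)^2+k$ is implied by the assumed lower bound on $n$, since $q\ge 4$ makes the exponent at least $2+5(k-1)\ge 3$. Lemma~\ref{lms3} gives $S_k(G)\ge 2k+1\ge 2k-2+a$ because $a\le 3$, and we are done.

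Otherwise $2\le \Delta<(2k+1)^2$; here $\Delta\ge 2$ because $G$ is connected with $n\ge 3$. Lemma~\ref{lms4} provides a path $P$ on $t$ vertices with $t>2\log_\Delta n-2$. Since $\log_\Delta n$ is decreasing in $\Delta$, we have
\[
2\log_\Delta n\ \ge\ 2\log_{(2k+1)^2}n=\log_{2k+1}n\ \ge\ q-2+(q+1)(k-1)r .
\]
Hence $t> q-4+(q+1)(k-1)r$, so $t\ge q-3+(q+1)(k-1)r$.

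The main point needing care is that $P$ may meet $H$. Deleting the at most $q$ vertices of $V(H)\cap V(P)$ splits $P$ into at most $q+1$ subpaths, with at least $t-q\ge (q+1)(k-1)r-3$ vertices in total. By pigeonhole, one subpath has at least $\big((q+1)(k-1)r-3\big)/(q+1)>(k-1)r-1$ vertices. Since the number of vertices is an integer, it is at least $(k-1)r$. Cutting this subpath into consecutive blocks of $r$ vertices gives $k-1$ vertex-disjoint copies of $P_r$, all disjoint from $V(H)$. Corollary~\ref{lms2} only requires vertex-disjoint subgraphs, not induced ones, so edges between these pieces and $H$ cause no problem, and the computation above completes the proof. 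The only delicate step is this counting, where the additive constant $-3$ coming from Lemma~\ref{lms4} must be absorbed by the integrality of the subpath length.
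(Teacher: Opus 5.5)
Your proof is correct and follows the paper's argument essentially step for step. Both handle the case $\Delta\ge(2k+1)^2$ by Lemma~\ref{lms3}, take a long path from Lemma~\ref{lms4}, delete $V(H)$ and apply pigeonhole over at most $q+1$ segments to extract $k-1$ disjoint copies of $P_{r_\varepsilon}$, and conclude via Corollary~\ref{lms2} and the defining property $\cos\bigl(\pi/(r_\varepsilon+1)\bigr)\ge 1-\varepsilon/(k-1)$. The only differences are cosmetic: you round $t$ to an integer before the pigeonhole step, so your slack is $3/(q+1)$ rather than the paper's $4/(q+1)$, and you explicitly record $\Delta\ge 2$ so that $\log_\Delta n$ is defined.
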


 \begin{proof} Since $r_\varepsilon\geq 1$, from the given condition, we obtain $n\geq (2k+1)^{2q-1}>(2k+1)^2+k$ as $q\geq 4$. If $\Delta\geq (2k+1)^2$, then by Lemma \ref{lms3}, we have $S_k(G) \geq 2k +1\geq 2k- 2 + a$ as $a\leq 3$.
 Otherwise, $\Delta<(2k+1)^2$. By Lemma \ref{lms4} with the condition on $n$, $G$ contains a path $P_t$ such that 
 \begin{equation}
 t >2\log_\Delta n-2>2\log_{(2k+1)^2} (2k+1)^{q-2+(q+1)(k-1)r_\varepsilon}-2 = (q+1)(k-1)r_\varepsilon+q-4.\label{sgd1}
  \end{equation}
 Let $P_t[V(P_t) \setminus V(H)] \cong P_{m_1} \cup P_{m_2} \cup \dots \cup P_{m_j}$ such that $m_1\geq m_2\geq \cdots\geq m_j$. Since $|V(H)|= q$, it follows that $j \le q+1$. Combining this fact with the result in (\ref{sgd1}), we have
 $$m_1 \geq \frac{t - q}{j}\geq \frac{t - q}{q+1}>\frac{(q+1)(k-1)r_\varepsilon - 4}{q+1}=(k-1)r_\varepsilon-\frac{4}{q+1}.$$
 It follows that $m_1\geq (k-1)r_\varepsilon$ as $m_1$ is integer and $q\geq 4$. Thus $(k-1)P_{r_\varepsilon}$ and $H$ are vertex-disjoint subgraphs of $G$. By Corollary \ref{lms2} and Lemma \ref{lm2}, we have 
  $$S_k(G) \geq \lambda_1(H) + (k-1)\lambda_1(P_{r_\varepsilon}) \geq a + 2\varepsilon + 2(k-1)\cos\left(\frac{\pi}{r_\varepsilon+1}\right).$$
  From the definition of $r_\varepsilon$, we get $\cos\left(\frac{\pi}{r_\varepsilon+1}\right)\geq 1-\frac{\varepsilon}{k-1}$, which implies that $S_k(G)\geq 2k-2+a$.
 This completes the proof.
\end{proof}
We now define $\theta_\varepsilon(k)$ as follows:
$$\theta_\varepsilon(k)=
\begin{cases}
0 & \text{if } k=2,\\[2mm]
\displaystyle
\left\lceil
\frac{\pi}{\arccos\left(1-\frac{\varepsilon}{k-2}\right)}
\right\rceil-1
& \text{if } k\ge 3.
\end{cases}$$
We next extend the preceding result to two vertex-disjoint subgraphs.
 \begin{lemma} \label{lms5.2} Let $G$ be a connected graph of order $n$ that contains two vertex-disjoint subgraphs $H_1$ and $H_2$, where $H_1$ has $q_1$ vertices and $H_2$ has $q_2$ vertices with $q_1+q_2\geq 5$. If $\lambda_1(H_1)+\lambda_1(H_2) \geq b + 2\varepsilon$ and $n \ge (2k+1)^{q_1+q_2-3+(q_1+q_2+1)(k-2)\theta_\varepsilon}$ with $b\in [4,\,5]$ and
 $\varepsilon \in (0,1]$, then $S_k(G) \geq 2k - 4 + b.$
\end{lemma}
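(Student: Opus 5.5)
We mirror the proof of Lemma~\ref{lms5.1}, with the pair $H_1,H_2$ in place of $H$ and $k-2$ long paths in place of $k-1$. Set $q=q_1+q_2\geq 5$.

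\emph{Case $k=2$.} Here $\theta_\varepsilon=0$ and $2k-4+b=b$. By Corollary~\ref{lms2} applied to $H_1$ and $H_2$ we get
$$S_2(G)\geq \lambda_1(H_1)+\lambda_1(H_2)\geq b+2\varepsilon>b,$$
and the hypothesis on $n$ is not needed.

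\emph{Case $k\geq 3$.} Since $\theta_\varepsilon\geq 1$ (the argument of $\arccos$ lies in $[0,1)$, so $\arccos\le\pi/2$ and the ceiling is at least $2$), we have
$$n\geq (2k+1)^{q-3+(q+1)}=(2k+1)^{2q-2}\geq (2k+1)^{8}>(2k+1)^2+k.$$
If $\Delta\geq(2k+1)^2$, Lemma~\ref{lms3} gives $S_k(G)\geq 2k+1\geq 2k-4+b$, since $b\le 5$. So we may assume $\Delta<(2k+1)^2$.

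By Lemma~\ref{lms4} and the bound on $n$, $G$ contains a path $P_t$ with
$$t>2\log_{(2k+1)^2}n-2\geq (q+1)(k-2)\theta_\varepsilon+q-5.$$
Removing the $q$ vertices of $V(H_1)\cup V(H_2)$ from $P_t$ leaves at most $q+1$ subpaths. Let $m_1$ be the length of the longest one. Then
$$m_1\geq \frac{t-q}{q+1}>(k-2)\theta_\varepsilon-\frac{5}{q+1}.$$
Since $q\ge5$, we have $5/(q+1)<1$, and integrality gives $m_1\geq (k-2)\theta_\varepsilon$. This is the only delicate point: the exponent offset $q-3$ (rather than $q-2$) is exactly what makes the loss $5/(q+1)$ stay below $1$. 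This is why the hypothesis $q_1+q_2\geq5$ is needed, and I would double-check this arithmetic carefully.

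That longest subpath therefore contains $(k-2)$ vertex-disjoint copies of $P_{\theta_\varepsilon}$, each disjoint from $H_1$ and $H_2$. Applying Corollary~\ref{lms2} to these $k$ subgraphs and using Lemma~\ref{lm2},
$$S_k(G)\geq b+2\varepsilon+2(k-2)\cos\Big(\frac{\pi}{\theta_\varepsilon+1}\Big).$$
By the definition of $\theta_\varepsilon$, we have $\theta_\varepsilon+1\geq \pi/\arccos(1-\varepsilon/(k-2))$. Hence
$$\cos\Big(\frac{\pi}{\theta_\varepsilon+1}\Big)\geq 1-\frac{\varepsilon}{k-2},$$
so the path terms contribute at least $2(k-2)-2\varepsilon$. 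Adding this to $b+2\varepsilon$ gives $S_k(G)\geq 2k-4+b$.
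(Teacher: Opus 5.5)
Your proof is correct and follows the paper's argument essentially step for step. The case $k=2$ is handled by Corollary~\ref{lms2}, the large-degree case by Lemma~\ref{lms3}, and otherwise you take a long path from Lemma~\ref{lms4}; its largest piece after deleting $V(H_1)\cup V(H_2)$ has more than $(k-2)\theta_\varepsilon-5/(q+1)$ vertices, so it contains $k-2$ disjoint copies of $P_{\theta_\varepsilon}$. You also supply details the paper leaves implicit, namely why $\theta_\varepsilon\ge 1$ for $k\ge 3$ and why $n\ge(2k+1)^2+k$.
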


\begin{proof}
For $k=2$, then $S_2(G)\ge\lambda_1(H_1)+\lambda_1(H_2)$ by Corollary \ref{lms2}, and the conclusion follows. For $k\ge3$, we have $\theta_\varepsilon\ge1$. The large-degree case follows
from Lemma~\ref{lms3}, since $b\le 5$. Otherwise,
Lemma~\ref{lms4} gives a path of order
\[
 t>(q_1+q_2+1)(k-2)\theta_\varepsilon+q_1+q_2-5.
\]
Remove the vertices of $H_1\cup H_2$ from this path. One remaining component
has order greater than $(k-2)\theta_\varepsilon-5/(q_1+q_2+1)$, and therefore at least
$(k-2)\theta_\varepsilon$. Select $k-2$ disjoint copies of $P_{\theta_\varepsilon}$ in it.
Corollary~\ref{lms2} now gives
\[
 S_k(G)\ge\lambda_1(H_1)+\lambda_1(H_2)
 +2(k-2)\cos\frac{\pi}{\theta_\varepsilon+1}\ge2k-4+b.
\]
\end{proof}

We are now ready to characterize the connected graphs satisfying
$S_k(G)<2k$. We first consider graphs containing a cycle.

 \begin{lemma} \label{lms6} Let $G$ be a connected graph with $m \geq n \geq (2k+1)^{10+13(k-1)r_{0.021}}$.
Then $S_k(G)<2k$ if and only if $G\cong C_n$.
\end{lemma}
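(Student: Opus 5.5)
We prove the two directions separately.

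\emph{Sufficiency.} Suppose $G\cong C_n$. By Lemma~\ref{lm22} the $k$ largest eigenvalues are $2$ and $2\cos(2\pi/n)$ twice, then $2\cos(4\pi/n)$ twice, and so on. Every eigenvalue other than the first is strictly less than $2$ when $k\ge 2$ and $n$ is large. Hence $S_k(C_n)<2k$.

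\emph{Necessity.} Assume $m\ge n$ and $G\not\cong C_n$. The aim is to find a subgraph $H$ with at most $13$ vertices and $\lambda_1(H)\ge 2+2(0.021)$. Lemma~\ref{lms5.1} with $a=2$ and $\varepsilon=0.021$ then gives $S_k(G)\ge 2k$. The order hypothesis $n\ge (2k+1)^{10+13(k-1)r_{0.021}}$ is exactly the requirement of Lemma~\ref{lms5.1} for $q\le 12$, since $q-2\le 10$ and $q+1\le 13$. So we need $4\le q\le 12$, and if $H$ is smaller we may pad it with extra vertices, which does not change $\lambda_1(H)$. The large-degree case $\Delta\ge(2k+1)^2$ is already absorbed into Lemma~\ref{lms5.1}. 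A shortcut is available when $G$ has two disjoint cycles: two cycles each give $\lambda_1=2$, and Lemma~\ref{lms5.2} would then need the sum to be at least $4+2\varepsilon$, which two cycles alone do not provide. So the main route is a single small dense subgraph $H$.

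\emph{Finding $H$.} Since $G$ is connected, $m\ge n$ and $G\not\cong C_n$, $G$ contains a cycle $C$ together with a further edge or vertex attached to it. Take a shortest cycle $C=C_g$.

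If $g\le 10$, then either some vertex outside $C$ is adjacent to $C$, or $n=g$, which is impossible because $n$ is huge. So $G$ contains $C_g$ with a pendant edge, i.e.\ $U_g$ with $g+1\le 11$ vertices. We have $\lambda_1(U_g)>2$ strictly, and the values can be checked: $\lambda_1(U_{10})$ is the smallest among them and is about $2.04$ or more. This needs verifying against $2.042$, and if it falls short we attach a second pendant vertex or a path of length $2$, giving a subgraph with at most $12$ vertices and a larger spectral radius.

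If $g\ge 11$, the girth is large. We instead use a long cycle together with a pendant path, or reduce to the bounded case through the structure of $G$ near the cycle. This is where the argument is delicate, since $\lambda_1(U_g)\to 2$ as $g\to\infty$.

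\emph{Main obstacle.} The main difficulty is the large-girth case. For $g\ge 11$, any small subgraph is a tree, and the trees with $\lambda_1>2.042$ on at most $12$ vertices are few; an example is the star $K_{1,4}$ with spectral radius $2$, which is not enough. The plan is therefore to use Lemma~\ref{lms5.2} instead, with two disjoint subgraphs $H_1,H_2$ whose spectral radii sum to at least $4+2\varepsilon$. One option is the cycle $C$ (with $\lambda_1=2$) together with a disjoint subgraph of the rest of the graph having $\lambda_1>2$. Another option is to avoid bounded $q$ altogether: Lemma~\ref{lm1} applied to the whole subgraph $U_g$ (or the cycle plus its attachment) shows that its top eigenvalues already contribute a sum exceeding $2k$ by an amount not tending to $0$ fast enough. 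I would first try to show directly, using Lemma~\ref{lm4}, that $G$ contains a connected subgraph with $\lambda_1>2$ of bounded order. The Smith classification says any connected graph that is not a subgraph of a Smith graph has $\lambda_1>2$, and a minimal such subgraph containing a vertex of degree at least $3$ on a cycle is an extended Dynkin configuration. I would then quantify the gap above $2$ for the minimal such subgraphs with at most $12$ vertices.
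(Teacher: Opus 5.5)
\textbf{Verdict: there is a genuine gap.} Your sufficiency argument is fine, and so is your reduction of necessity to Lemma~\ref{lms5.1} with $a=2$, $\varepsilon=0.021$ and a subgraph on $q\le 12$ vertices. Your treatment of girth $g\le 10$ via $U_g$ is also exactly what the paper does. The gap is that the case $g\ge 11$ is never resolved, and the reason you give for it being delicate is false. It is not true that $\lambda_1(U_g)\to 2$. By the Hoffman--Smith subdivision theorem (Lemma~\ref{lm8}), $\lambda_1(U_g)$ strictly decreases in $g$, and its limit is $\eta=\sqrt{2+\sqrt5}\approx 2.058$, as computed in Step~1 of the proof of Theorem~\ref{smlm6}. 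In particular $\lambda_1(U_{10})>\eta>2.058$, which also settles your uncertainty about the comparison with $2.042$ in the small-girth case.

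The missing idea for $g\ge 11$ is a bounded tree inside $U_g$ with a large enough spectral gap. Take the pendant vertex, its neighbour $v_0$ on the cycle, and five consecutive cycle vertices on each side of $v_0$. Since $g\ge 11$, these are $12$ distinct vertices, and they span a copy of $T_{1,5,5}$. One checks $\lambda_1(T_{1,5,5})>2.042=2+2(0.021)$. With $q=12$ the exponent $q-2+(q+1)(k-1)r_{0.021}$ is exactly $10+13(k-1)r_{0.021}$, so Lemma~\ref{lms5.1} gives $S_k(G)\ge 2k$. This is the paper's argument.

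None of your fallback routes closes this case as stated:
\begin{itemize}
\item Pairing the shortest cycle with a disjoint subgraph via Lemma~\ref{lms5.2} fails for two reasons. First, $C_g$ can have unbounded order, so the order condition, which depends on $q_1+q_2$, is not met. Second, a disjoint subgraph with $\lambda_1>2$ need not exist at all, for instance when $G=U_{n-1}$.
\item Minimal graphs with spectral radius above $2$ from Smith's classification do not have enough gap. Trees such as $T_{1,2,6}$ or $T_{1,3,4}$ have spectral radius below $2.02$. Since the hypothesis pins $\varepsilon=0.021$ and $q\le 12$, a smaller gap would force a larger order bound than the one in the statement.
\item The Ky Fan argument on all of $U_g$ is not carried out. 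It would in any case rely on $\lambda_1(U_g)$ staying bounded away from $2$, which is exactly the fact you believed false.
\end{itemize}
What you need is a specific non-minimal subgraph of $U_g$ on at most $12$ vertices with $\lambda_1\ge 2.042$, such as $T_{1,5,5}$.
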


 \begin{proof}
 \noindent\textbf{Sufficiency:} If $G\cong C_n$, then by Lemma \ref{lm22}, we have $\lambda_1(G)=2$ and $\lambda_i(G)<2$ for $i\geq 2$. Thus $S_k(G)<2k$.
 \medskip 

 \noindent\textbf{Necessity:} Suppose that $S_k(G)<2k$. We have to prove that $G\cong C_n$. Let $C_t$ be a shortest cycle of $G$. If $t=n$, then $G\cong C_n$, we are done. Otherwise, $t\leq n-1$. Then $U_t$ is a subgraph of $G$. First we assume that $t \geq 11$. Then $T_{1,5,5}$ is a subgraph of $U_t$. Note that $\lambda_1(T_{1,5,5})\geq 2.042$ by SAGE \cite{SAGE}.
 Setting $H=T_{1,5,5}$, $a=2$ and $\varepsilon=0.021$ in Lemma \ref{lms5.1} with the given condition on $n$, we have $S_k(G) \geq 2k$, a contradiction as $S_k(G)<2k$.

 Next we assume that $3\leq t \leq 10$. Note that  $\lambda_1(U_{t})\geq \lambda_1(U_{10}) \geq 2.058$ by SAGE \cite{SAGE}.
 Setting $H=U_t$, $a=2$ and $\varepsilon=0.021$ in Lemma \ref{lms5.1}, together with the fact that
 $$n\geq (2k+1)^{10+13(k-1)r_{0.021}}>(2k+1)^{t-1+(t+2)(k-1)r_{0.021}},$$
 we get $S_k(G) \geq 2k$, again a contradiction as $S_k(G)<2k$.
 This completes the proof.
 \end{proof}

We next consider the tree case.
 \begin{lemma} \label{lms7}
 Let $T$ be a tree with $n \geq (2k+1)^{20+23(k-1)r_{0.0097}}$. Then $S_k(T)<2k$ if and only if $T \in \{Y_n, W_n, P_n\}$.
 \end{lemma}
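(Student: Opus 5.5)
The plan is to prove sufficiency by bounding the eigenvalues directly, and necessity by the same strategy as Lemma \ref{lms6}: locate a small subgraph $H$ of $T$ with $\lambda_1(H)\ge 2+2\varepsilon$ for $\varepsilon=0.0097$, then apply Lemma \ref{lms5.1} with $a=2$.

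\textbf{Sufficiency.} For $P_n$, Lemma \ref{lm2} shows that all eigenvalues are strictly less than $2$, so $S_k(P_n)<2k$. For $Y_n$, Lemma \ref{lm4}(i) gives $\lambda_1(Y_n)<2$, hence every eigenvalue is below $2$ and $S_k(Y_n)<2k$. For $W_n$, Lemma \ref{lm4}(ii) gives $\lambda_1(W_n)=2$, and I must show $\lambda_2(W_n)<2$. Deleting a middle vertex of the internal path of $W_n$ leaves two components, each isomorphic to a $Y$-type graph or a small path, and each has spectral radius below $2$. Interlacing (Lemma \ref{lm6}) then gives $\lambda_2(W_n)\le\lambda_1(W_n-v)<2$, so $S_k(W_n)<2+2(k-1)=2k$.

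\textbf{Necessity.} Suppose $S_k(T)<2k$. If $\lambda_1(T)<2$ or $\lambda_1(T)=2$, Lemma \ref{lm4} forces $T\in\{P_n,Y_n,W_n\}$, since $C_n$ is not a tree. So assume $\lambda_1(T)>2$. Then $T$ contains a minimal tree $H$ with $\lambda_1(H)>2$, that is, a minimal forbidden Smith subtree.
\begin{itemize}
\item If $\Delta(T)\ge 4$, take $H=K_{1,4}$. Here $\lambda_1=2$, which is not enough, so use $K_{1,5}$ when $\Delta\ge5$. When $\Delta=4$, extend by one more vertex along a neighbour to obtain a $6$-vertex tree with $\lambda_1>2$.
\item If $T$ has two vertices of degree $3$, a path between them together with the extra branches gives a tree containing $W_m$ with one extra pendant vertex, where $m$ is the distance plus $4$. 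To keep $q$ bounded, $H$ must have at most $20$ vertices, so $m$ cannot be too large. I handle this by cases. If the two branch vertices are at distance at most about $16$, then $H$ is the corresponding $W$-type tree plus a leaf, with $\lambda_1\ge 2.0194$. If they are far apart, no small $H$ exists, but then I use Lemma \ref{lms5.2}: take $H_1,H_2$ to be neighbourhoods of the two branch points, each a copy of $T_{1,1,s}$ or $T_{1,2,5}$-like tree. A cleaner option is to pick a $T_{1,2,6}$-type piece near one branch vertex.
\item If $T$ has exactly one vertex of degree $3$, then $T=T_{a,b,c}$ is not in the Smith list. Hence $T$ contains $T_{2,2,2}$, $T_{1,3,3}$, or $T_{1,2,6}$. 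The critical graphs have $\lambda_1=2$, so I instead choose slightly larger subtrees such as $T_{2,2,3}$, $T_{1,3,4}$, or $T_{1,2,7}$, each with at most $20$ vertices and $\lambda_1\ge2.0194$ (checked numerically).
\end{itemize}
Lemma \ref{lms5.1} with $q\le 20$ then yields $S_k(T)\ge 2k$, a contradiction. Here $q\le 20$ matches the exponent $20+23(k-1)r$ since $q+1\le 23$ and $q-2\le 20$.

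\textbf{Main obstacle.} The hardest step is the case of two degree-$3$ vertices that are far apart. In that case no bounded-size subtree has spectral radius exceeding $2$ by a fixed margin; indeed $\lambda_1$ of the $W$-type tree plus a leaf tends to $2$. My fallback is to use the second eigenvalue. The two ends each contain a $T_{1,1,s}$-type piece with $\lambda_1$ near $2$, and the threshold $2$ is already attained by $D$-type graphs. So I would take $H_1$ and $H_2$ to be disjoint copies of $W$-like caps extended by long path pieces, each with $\lambda_1\ge 2-\delta$, and apply Lemma \ref{lms5.2} with $b=4$. The margin is delicate, and I expect this case to require either a sharper choice of $H_1,H_2$ (for example, each containing a degree-$3$ vertex with branches $1,1,$ long, giving exactly $2$), or an argument that $\lambda_1+\lambda_2$ of $T$ itself exceeds $4$.
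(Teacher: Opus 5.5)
Your sufficiency argument is fine; interlacing after deleting a vertex of the internal path of $W_n$ is a valid substitute for the simplicity of $\lambda_1$. The case $\Delta\ge4$ is also fine. The gap is in the case $\Delta=3$ with two or more vertices of degree $3$.

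Your fallback through Lemma~\ref{lms5.2} with $b=4$ cannot work. It needs $\lambda_1(H_1)+\lambda_1(H_2)\ge 4+2\varepsilon$, whereas $T_{1,1,s}$, $T_{1,2,5}$ and $\tilde D$-type caps all have spectral radius at most $2$. More fundamentally, two distant forks is exactly $W_n$, which does satisfy $S_k<2k$, so no argument that only looks at the two branch points can succeed. The premise behind the fallback is also false. $\lambda_1(T_{1,2,c})$ increases to about $2.0198$ as $c\to\infty$, not to $2$; only $T_{1,1,c}$ stays below $2$. So an extra vertex at a fork always produces a bounded subtree with spectral radius above $2.0194$, however far apart the forks are.

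The missing idea, which is what the paper does, is to localize at a single branch vertex.
\begin{itemize}
\item If every degree-$3$ vertex is adjacent to two leaves, then $T\in\{Y_n,W_n\}$, since removing the leaves leaves a path.
\item Otherwise take a degree-$3$ vertex $v_0$ with at most one leaf neighbour. A longest path has more than $36$ vertices, so some vertex $u$ satisfies $d(u,v_0)=18$.
\item If the $v_0$--$u$ path leaves $v_0$ through $v_1$, then one of the other two neighbours of $v_0$ is not a leaf. Hence $T\supseteq T_{1,2,18}$, and $\lambda_1(T_{1,2,18})>2.0194$.
\item Lemma~\ref{lms5.1} with $q=22$, $a=2$ and $\varepsilon=0.0097$ then gives the contradiction. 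This is exactly where the exponent $20+23(k-1)r_{0.0097}$ comes from.
\end{itemize}
This single argument covers one, two or many branch vertices at any distances, so Lemma~\ref{lms5.2} is never needed.

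Your numerical claims in the one-branch case are also off. $\lambda_1(T_{1,3,4})\approx 2.015$ and $\lambda_1(T_{1,2,7})\approx 2.011$ are both below $2.0194$. The critical tree is $T_{1,2,5}=\tilde E_8$, not $T_{1,2,6}$. So there too you need a long arm, such as in $T_{1,2,18}$, which the long-path lemma supplies.
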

 
 \begin{proof}
 \noindent\textbf{Sufficiency:} If $T \in \{Y_n, W_n, P_n\}$, then $\lambda_1(T) \leq 2$ by Lemma \ref{lm4}. It is well known that $\lambda_1(G)$ has multiplicity $1$ for any connected graph $G$. Thus $\lambda_2(T)<\lambda_1(T)\leq 2$, which implies that $S_k(T)<2k$.

 \vspace{0.5em}
  \noindent\textbf{Necessity:} Suppose that $S_k(T) < 2k$. We have to prove that $T \in \{Y_n, W_n, P_n\}$. Let $v_0$ be a vertex such that  $d(v_0) = \Delta$ and let $L_i = \{u \mid u \in V(T) \text{ and } d_T(u, v_0) = i\}$ for $i\geq 1$. First we assume that $\Delta \ge 4$. If $\Delta=n-1$, then $T$ is isomorphic to a star graph, and hence 
  $$S_k(T)=\sqrt{n-1}>2k$$
  as $n \geq (2k+1)^{20+23(k-1)r_{0.0097}}$.

  \vspace*{2mm}
  
  Otherwise, $\Delta\leq n-2$. Then $|L_1| \geq 4$ and $|L_2| \geq 1$, which implies that  $T$ contains a subgraph $T'$ which is obtained from $K_{1,\,4}$ by adding a pendant edge to one of its pendant vertices. By SAGE \cite{SAGE}, $\lambda_1(T') \geq 2.074$. 
   Setting $H=T'$, $a=2$ and $\varepsilon = 0.037$ in Lemma \ref{lms5.1}, together with the fact that
   $$n \geq (2k+1)^{20+23(k-1)r_{0.0097}}> (2k+1)^{4+7(k-1)r_{0.037}},$$
   we get $S_k(T)\geq 2k$, a contradiction as $S_k(T) < 2k$. 

   \vspace*{2mm}
   
   Next we assume that $\Delta \leq 3$. For $\Delta = 2$, then $T \cong P_n$, and the conclusion holds trivially. It remains to consider the case  $\Delta = 3$. Let $L_1=\{v_1,\,v_2,\,v_3\}$, which is the neighbor set of $v_0$. Let $P_t$ be a longest path of $T$. 
   As $n \geq (2k+1)^{8+11(k-1)r_{0.0097}}$, by Lemma \ref{lms4}, we obtain 
    $$t>2\log_3n-2> 20+23(k-1)r_{0.0097}>36.$$
    This implies that  there exists a vertex $u$ such that  $d(u, v_{0})= 18$.  Without loss of generality, we assume $d(u, v_{1})= 17$. If $d(v_{2})d(v_{3})\geq 2$, then $T_{1,2,18}$ is a subgraph of $T$. Note that $\lambda_1(T_{1,2,18})> 2.0194$. Setting $H=T_{1,2,18} $, $a=2$ and $\varepsilon = 0.0097$ in Lemma \ref{lms5.1} with the given condition on $n$, we get $S_k(T)\ge 2k$, a contradiction as $S_k(T) < 2k$. Otherwise, $d(v_{2})d(v_{3})\leq  1$, that is, $d(v_2)=d(v_3)=1$. This means that every vertex of degree $3$ is adjacent to exactly two pendent vertices. Hence $T\cong W_n$ or $T\cong Y_n$. This completes the proof.
\end{proof}

By combining Lemmas \ref{lms6} and \ref{lms7}, we obtain the complete
characterization of connected graphs with $S_k(G)<2k$.
 \begin{theorem} \label{lms8} Let $G$ be a connected graph with $n \geq (2k+1)^{20+23(k-1)r_{0.0097}}$. Then $S_k(G)< 2k$  if and only if $G\in \{Y_n,\,W_n,\,P_n,\,C_n\}$.
\end{theorem}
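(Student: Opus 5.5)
The theorem is a direct combination of Lemma~\ref{lms6} and Lemma~\ref{lms7}. The plan is to split according to whether $G$ is a tree. First we check that the order hypothesis of Theorem~\ref{lms8} is strong enough for both lemmas, and then we invoke each lemma in its own case.

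\emph{Comparing the thresholds.} The first step is to show
\[
(2k+1)^{20+23(k-1)r_{0.0097}}\ \ge\ (2k+1)^{10+13(k-1)r_{0.021}},
\]
so that the hypothesis of Lemma~\ref{lms6} holds. Since $\arccos$ is decreasing on $[-1,1]$, and $1-0.0097/(k-1)>1-0.021/(k-1)$, we have
\[
\arccos\bigl(1-0.0097/(k-1)\bigr)<\arccos\bigl(1-0.021/(k-1)\bigr).
\]
Dividing $\pi$ by these and taking ceilings shows that $r_\varepsilon$ is nonincreasing in $\varepsilon$. Hence $r_{0.0097}\ge r_{0.021}\ge 1$. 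It follows that $20+23(k-1)r_{0.0097}\ge 10+13(k-1)r_{0.021}$, and since $2k+1>1$ the displayed inequality holds. The hypothesis of Lemma~\ref{lms7} is exactly the hypothesis of the theorem.

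\emph{Splitting into cases.} A connected graph of order $n$ satisfies $m\ge n-1$, with equality exactly when $G$ is a tree.
\begin{itemize}
\item If $m\ge n$, then Lemma~\ref{lms6} applies and gives $S_k(G)<2k$ if and only if $G\cong C_n$.
\item If $m=n-1$, then $G$ is a tree, and Lemma~\ref{lms7} gives $S_k(G)<2k$ if and only if $G\in\{Y_n,W_n,P_n\}$.
\end{itemize}

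\emph{Combining the cases.} Both directions of the theorem follow from these two equivalences. This works because $C_n$ is not a tree, while $Y_n$, $W_n$ and $P_n$ are trees, so each candidate graph lands in exactly one of the two cases. The comparison of thresholds is the only step requiring any verification, namely the monotonicity of $r_\varepsilon$ in $\varepsilon$.
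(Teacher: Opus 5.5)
Your proposal is correct and is exactly the paper's argument: the theorem follows by combining Lemma~\ref{lms6} for the case $m\ge n$ with Lemma~\ref{lms7} for the tree case. Your check that $r_\varepsilon$ is nonincreasing in $\varepsilon$, so that the theorem's order bound dominates the bound required in Lemma~\ref{lms6}, makes explicit a step the paper leaves implicit.
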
  

  It remains to compare the four graph families in
Theorem~\ref{lms8}.
 \begin{lemma} \label{lms9} For $n \geq 5k$, we have $S_{k}(Y_{n})>S_{k}(P_{n})$ and $S_{k}(W_{n})>S_{k}(P_{n})$.
\end{lemma}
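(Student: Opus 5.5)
The plan is to compute the spectra of $Y_n$ and $W_n$ explicitly and compare them with the spectrum of $P_n$ from Lemma~\ref{lm2} term by term: I will show that $\lambda_i(Y_n)>\lambda_i(P_n)$ and $\lambda_i(W_n)>\lambda_i(P_n)$ for every $1\le i\le k$, and summing over $i$ gives both inequalities. The key observation is that $Y_n$ is the Dynkin diagram $D_n$ and $W_n$ is the extended Dynkin diagram $\tilde D_{n-1}$. Their spectra are classical:
\[
\operatorname{spec}(Y_n)=\Big\{0\Big\}\cup\Big\{2\cos\tfrac{(2j-1)\pi}{2n-2}:\ j=1,\ldots,n-1\Big\},
\]
\[
\operatorname{spec}(W_n)=\Big\{0,0\Big\}\cup\Big\{2\cos\tfrac{j\pi}{n-3}:\ j=0,1,\ldots,n-3\Big\}.
\]

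To make the argument self-contained, I would verify these spectra by exhibiting eigenvectors. For $Y_n$, fold the two pendant vertices $v_1,v_2$ into one vertex carrying the same value; this gives sine-type vectors on the path $v_3,\ldots,v_n$ with a reflecting condition at the branch vertex. In addition, the vector $e_{v_1}-e_{v_2}$ is an eigenvector with eigenvalue $0$. For $W_n$, the same folding at both ends yields cosine-type vectors, and the vectors $e_{v_1}-e_{v_2}$ and $e_{v_{n-1}}-e_{v_n}$ give the double eigenvalue $0$. Counting the eigenvectors shows the lists are complete. Alternatively, one can cite the standard spectra of $D_n$ and $\tilde D_{n-1}$ from \cite{BOOK}.

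Next I identify the $k$ largest eigenvalues. Since $n\ge 5k$, the arguments $\frac{(2i-1)\pi}{2n-2}$ and $\frac{(i-1)\pi}{n-3}$ for $i\le k$ are less than $\pi/2$. Hence the first $k$ cosine-type eigenvalues of each graph are positive and exceed the zero eigenvalues. It follows that
\[
\lambda_i(Y_n)=2\cos\tfrac{(2i-1)\pi}{2n-2},\qquad \lambda_i(W_n)=2\cos\tfrac{(i-1)\pi}{n-3}\qquad(1\le i\le k).
\]

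The comparison is then elementary, because cosine is strictly decreasing on $[0,\pi]$. The inequality $\frac{2i-1}{2n-2}<\frac{i}{n+1}$ reduces to $(2i-1)(n+1)<2i(n-1)$, that is, $4i<n+1$. Similarly, $\frac{i-1}{n-3}<\frac{i}{n+1}$ reduces to $(i-1)(n+1)<i(n-3)$, that is, again $4i<n+1$. Both hold for all $i\le k$ because $n\ge 5k\ge 4k$. Therefore each of the top $k$ eigenvalues of $Y_n$ and of $W_n$ strictly exceeds the corresponding eigenvalue of $P_n$, and summing over $i$ gives the claim.

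The only real obstacle is establishing the spectra of $Y_n$ and $W_n$ cleanly. In particular, for $W_n$ I must make sure the zero eigenvalues and the boundary eigenvalues $\pm2$ are counted correctly. Once the spectra are in hand, the rest is the routine inequality $4i<n+1$.
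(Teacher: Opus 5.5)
Your argument is correct, but it takes a different route from the paper. The paper never computes the spectra of $Y_n$ or $W_n$. It plugs the first $k$ eigenvectors $x_{ji}=\sin(ij\pi/(n+1))$ of $P_n$ into Ky Fan's principle for $A(Y_n)$ and $A(W_n)$. The quadratic forms change only by $2x_{j1}(x_{j3}-x_{j2})$, plus an equal mirror-image term for $W_n$. This is positive because $\sin(3j\pi/(n+1))>\sin(2j\pi/(n+1))$ whenever $5j<n+1$, which is exactly where the hypothesis $n\ge 5k$ comes from. That perturbation argument needs only the spectrum of the path, and it would adapt to other local modifications of $P_n$ whose spectra have no closed form.

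Your identification $Y_n\cong D_n$ and $W_n\cong\tilde D_{n-1}$ gives more. It yields the termwise inequalities $\lambda_i(Y_n)>\lambda_i(P_n)$ and $\lambda_i(W_n)>\lambda_i(P_n)$ for every $i$ with $4i<n+1$. So the lemma already holds for $n\ge 4k$, and weighted versions follow at once. The price is reliance on the classical spectra of $D_n$ and $\tilde D_{n-1}$, which you must cite or verify.

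The step you flag as the main obstacle is not needed for the inequality itself. It is enough to exhibit eigenvectors for the $k$ distinct eigenvalues $2\cos\frac{(2i-1)\pi}{2n-2}$ (resp.\ $2\cos\frac{(i-1)\pi}{n-3}$), $1\le i\le k$. Since $S_k$ dominates the sum of any $k$ eigenvalues, this already gives $S_k(Y_n)$ (resp.\ $S_k(W_n)$) at least their sum, which exceeds $S_k(P_n)$. Completeness of the spectra, including the correct count of the zeros and of $\pm 2$, matters only for your stronger claim that these values are precisely $\lambda_1,\dots,\lambda_k$.
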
 

 \begin{proof} Let ${\bf x_1}=(x_{11}, \dots, x_{1n})^T, {\bf x_2}=(x_{21}, \dots, x_{2n})^T, \dots, {\bf x_k}=(x_{k1}, \dots, x_{kn})^T$ be the eigenvectors corresponding to eigenvalues $\lambda_{1}(P_{n}), \lambda_{2}(P_{n}), \dots, \lambda_{k}(P_{n})$ of $P_{n}$, respectively. By Lemma \ref{lm2}, we have $x_{ji} = \sin\left(\frac{ij\pi}{n+1}\right)$ for $1 \leq j \leq k, 1 \leq i \leq n$. Note that ${\bf x_1} \perp {\bf x_2} \perp \dots \perp {\bf x_k}$. We will first prove \(S_{k}(Y_{n})>S_{k}(P_{n})\). By Lemma \ref{lm1}, we have
 \begin{align*}
 S_k(Y_n) & \ge \sum_{j=1}^k \frac{{\bf x_j}^T A(Y_n) {\bf x_j}}{{\bf x_j}^T {\bf x_j}} \\
          & = \sum_{j=1}^k \frac{{\bf x_j}^T A(P_n) {\bf x_j} + 2 x_{j1}(x_{j3}-x_{j2})}{{\bf x_j}^T  {\bf x_j}} \\
          & = \sum_{j=1}^k \frac{{\bf x_j}^T A(P_n) {\bf x_j}}{{\bf x_j}^T {\bf x_j}} + \sum_{j=1}^k \frac{2 x_{j1}(x_{j3}-x_{j2})}{{\bf x_j}^T {\bf x_j}} \\
          & = S_k(P_n) + \sum_{j=1}^k \frac{2 \sin\left(\frac{j\pi}{n+1}\right)\left(\sin\left(\frac{3j\pi}{n+1}\right)-\sin\left(\frac{2j\pi}{n+1}\right)\right)}{{\bf x_j}^T {\bf x_j}} \\
          & > S_k(P_n)
 \end{align*}
as $n \ge 5k$ and for $j=1,\,2,\ldots,\,k$,
$$\sin\left(\frac{3j\pi}{n+1}\right)>\sin\left(\frac{2j\pi}{n+1}\right).$$

 By a similar way, we also have
 \begin{align*}
 S_{k}(W_n) & \geq \sum_{j=1}^k\frac{{\bf x_j}^{T} A\left(W_{n}\right) {\bf x_j}}{{\bf x_j}^{T} {\bf x_j}} \\
            & = \sum_{j=1}^k\frac{{\bf x_j}^{T} A\left(P_{n}\right) {\bf x_j}+2 x_{j1}\left(x_{j3}-x_{j2}\right)+2 x_{jn}\left(x_{j(n-2)}-x_{j(n-1)}\right)}{{\bf x_j}^{T} {\bf x_j}} \\
            & = S_{k}\left(P_{n}\right) + \sum_{j=1}^k\frac{2 x_{j1}\left(x_{j3}-x_{j2}\right)+2 x_{jn}\left(x_{j(n-2)}-x_{j(n-1)}\right)}{{\bf x_j}^{T} {\bf x_j}}.
 \end{align*}
Note that $x_{jn}\left(x_{j(n-2)}-x_{j(n-1)}\right)=x_{j1}\left(x_{j3}-x_{j2}\right)>0$ for $1 \leq j \leq k$ and $n \geq 5k$. Combining the above results, we obtain that $S_{k}(W_{n})>S_{k}(P_{n})$.
\end{proof}

 \begin{lemma} \label{lms10}  For $n \geq \frac{\pi^2}{2}k^2$, we have $S_k(C_n) > S_k(P_n)$.
 \end{lemma}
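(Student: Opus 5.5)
We will compare the two sums directly using the explicit spectra in Lemmas~\ref{lm2} and \ref{lm22}. The eigenvalues of $C_n$ are $2\cos(2i\pi/n)$; listed in decreasing order they are $2$ (once), then $2\cos(2\pi/n)$ twice, $2\cos(4\pi/n)$ twice, and so on. Hence
\[
S_k(C_n)\ \ge\ 2+2\sum_{j=1}^{k-1}\cos\Big(\frac{2\lceil j/2\rceil\pi}{n}\Big),
\qquad
S_k(P_n)=2\sum_{j=1}^{k}\cos\Big(\frac{j\pi}{n+1}\Big).
\]
The first inequality is an equality; we only need the lower bound. The plan is to pair the terms: the leading $2$ for $C_n$ against the term $2\cos(\pi/(n+1))$ for $P_n$, and the $j$-th cycle term ($1\le j\le k-1$) against the $(j+1)$-th path term $2\cos((j+1)\pi/(n+1))$.

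Since $2\lceil j/2\rceil\le j+1$, it suffices to compare $\cos((j+1)\pi/n)$ with $\cos((j+1)\pi/(n+1))$. The difficulty is that the cycle argument $(j+1)\pi/n$ is slightly larger than the path argument $(j+1)\pi/(n+1)$, so these individual pairings go the wrong way. The slack must therefore come from the first pair:
\[
2-2\cos\Big(\frac{\pi}{n+1}\Big)\ \ge\ \frac{\pi^2}{(n+1)^2}-\frac{\pi^4}{12(n+1)^4},
\]
using the upper bound in Lemma~\ref{lm5}. Each subsequent loss is controlled via the mean value theorem or Lemma~\ref{lm5}:
\[
\cos\Big(\frac{(j+1)\pi}{n+1}\Big)-\cos\Big(\frac{(j+1)\pi}{n}\Big)\ \le\ \frac{(j+1)\pi}{n}\cdot\frac{(j+1)\pi}{n(n+1)}.
\]
The total loss is then about $2\pi^2\sum_{j\le k}j^2/n^3\approx 2\pi^2k^3/(3n^3)$. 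This is of lower order than the gain $\pi^2/n^2$ once $n\gtrsim k^{3/2}$, so the hypothesis $n\ge \frac{\pi^2}{2}k^2$ is comfortably sufficient.

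A cleaner bookkeeping keeps $\lceil j/2\rceil$, since the cycle arguments are roughly half the path arguments. The cycle terms then actually win each pairing by about $\frac{\pi^2}{2n^2}\big((j+1)^2-4\lceil j/2\rceil^2\big)\ge 0$. This gives a strict gain with no loss analysis at all, except for checking the $n$ versus $n+1$ denominators. We would write it as follows.

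Using $1-\frac{x^2}{2}\le\cos x\le 1-\frac{x^2}{2}+\frac{x^4}{24}$ (valid because all arguments are below $1$, since $n\ge \frac{\pi^2}{2}k^2$ forces $(k+1)\pi/n<1$), we reduce the claim to
\[
\sum_{j=1}^{k}\frac{j^2\pi^2}{(n+1)^2}-\sum_{j=1}^{k}\frac{j^4\pi^4}{12(n+1)^4}\ >\ \sum_{j=1}^{k-1}\frac{4\lceil j/2\rceil^2\pi^2}{n^2}.
\]
The left side is roughly $\pi^2k^3/(3n^2)$, while the right side is roughly $\pi^2k^3/(3n^2)$ as well. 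We expect this step to be the main obstacle: the leading cubic terms nearly cancel, so the lower-order terms must be compared precisely. Write $k=2s$ or $k=2s+1$ and compute $\sum_{j\le k-1}4\lceil j/2\rceil^2$ exactly; it equals $\sum_{j\le k}j^2-\Theta(k^2)$. The difference in favor of $P_n$'s deficit is then of order $\pi^2k^2/n^2$. This difference must dominate the correction $\pi^2k^3/n^3$ from $n$ versus $n+1$ and the quartic error $\pi^4k^5/(60n^4)$. Both reduce to $n\gtrsim \frac{\pi^2}{2}k^2$ up to constants. The hypothesis on $n$ is presumably tuned exactly so that the quartic error $\pi^4k^5/n^4$ is beaten by the $\pi^2k^2/n^2$ gap, i.e.\ $n^2\gtrsim \pi^2k^3$, with $k^2$ giving room to spare. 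We would finish by checking these inequalities carefully in both parity cases.
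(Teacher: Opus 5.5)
Your final route (the global Taylor comparison) is correct, but discard the first sketch. There the total loss $\approx 2\pi^2k^3/(3n^3)$ is below the gain $\pi^2/n^2$ only when $n>2k^3/3$, not when $n\gtrsim k^{3/2}$. At $n=\frac{\pi^2}{2}k^2$ the ratio loss/gain is about $4k/(3\pi^2)$, which exceeds $1$ for $k\ge 8$, so that route does not prove the lemma under the stated hypothesis.

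In the second bookkeeping the odd-$j$ pairings also do not win. There $2\lceil j/2\rceil=j+1$, so the cycle term is smaller by roughly $2\pi^2(j+1)^2/n^3$. Only the aggregated inequality you reduce to is valid, and it does hold. The step you left open closes as follows:
\[
\sum_{j=1}^{k}j^2-\sum_{j=1}^{k-1}4\lceil j/2\rceil^2=
\begin{cases}
k(k+1)/2, & k \text{ odd},\\
k(k-1)/2, & k \text{ even}.
\end{cases}
\]
Multiply your reduced inequality by $n^2/\pi^2$ and use $\frac{n^2}{(n+1)^2}=1-\frac{2n+1}{(n+1)^2}$. It then suffices that
\[
\frac{k(k-1)}{2}>\frac{2n+1}{(n+1)^2}\sum_{j=1}^{k}j^2+\frac{\pi^2n^2}{12(n+1)^4}\sum_{j=1}^{k}j^4 .
\]
The bounds $\frac{2n+1}{(n+1)^2}<\frac{2}{n}\le\frac{4}{\pi^2k^2}$, $\frac{n^2}{(n+1)^4}<\frac{1}{n^2}\le\frac{4}{\pi^4k^4}$ and $\sum j^4\le k^5$ make the right side at most $\frac{5k+7}{3\pi^2}$. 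This is less than $\frac{k(k-1)}{2}$ for every $k\ge2$.

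The paper avoids the near-cancellation of the $k^3$ terms by grouping differently. It compares each doubled cycle eigenvalue $2\cos(2i\pi/n)$ with the two path eigenvalues $2\cos(2i\pi/(n+1))$ and $2\cos((2i+1)\pi/(n+1))$. The cycle side wins exactly, by concavity of $\cos$ together with $2i/n<(2i+\frac12)/(n+1)$, i.e.\ $n>4i$. Combined with $2>2\cos(\pi/(n+1))$, this settles odd $k$ with no Taylor estimates and needs only $n>2k-2$.

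For even $k$ a single leftover pair remains: $1+\cos(k\pi/n)$ against $\cos(\pi/(n+1))+\cos(k\pi/(n+1))$. The paper handles it with Lemma~\ref{lm5} and an integral bound. That is where $n\ge\frac{\pi^2}{2}k^2$ enters: the slack $\approx\pi^2/n^2$ must beat a loss $\approx 2\pi^2k^2/n^3$. It does not come from the quartic error, as you guessed.

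Your approach treats both parities in one computation, and your estimate would in fact go through for $n$ of order $k^{3/2}$. The price is exact power sums and tracking a delicate cancellation. The paper's grouping is more elementary, and it shows that odd $k$ needs essentially no size condition at all.
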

 
 \begin{proof} We now divide the proof into two cases:

 \vspace*{3mm}
  \noindent
 ${\bf Case\,1.}$ $k$ is odd. Then $k\geq 3$ and 
 $$S_k(C_n) = 2 + 2\sum_{i=1}^{\frac{k-1}{2}} 2\cos\frac{2i\pi}{n}, \quad S_k(P_n) = 2\cos\frac{\pi}{n+1} + 2\sum_{i=1}^{\frac{k-1}{2}}\left(\cos\frac{2i\pi}{n+1} + \cos\frac{(2i+1)\pi}{n+1}\right).$$

 \vspace*{3mm}  
  \noindent
${\bf Claim\,1.}$ $2\cos\frac{2i\pi}{n} > \cos\frac{2i\pi}{n+1} + \cos\frac{(2i+1)\pi}{n+1}$ for all $i = 1,2,\dots,\frac{k-1}{2}$.

 \vspace*{3mm} 
  \noindent\textbf{Proof of Claim 1.} Since $y = \cos x$ is a strictly decreasing and concave function on $x \in \left[0,\frac{\pi}{2}\right]$, it follows that
 $$\cos\frac{2i\pi}{n+1} + \cos\frac{(2i+1)\pi}{n+1} < 2\cos\left(\frac{2i+\frac{1}{2}}{n+1}\pi\right).$$
 Moreover,
 $$2\cos\frac{2i\pi}{n} > 2\cos\left(\frac{2i+\frac{1}{2}}{n+1}\pi\right) \iff \frac{2i\pi}{n} < \frac{2i+\frac{1}{2}}{n+1}\pi \iff 2i(n+1) < 2in + \frac{1}{2}n \iff n > 4i.$$
 Thus, this inequality holds uniformly for \(n > 2k-2\), and Claim 1 is proved.\\
 By \textbf{Claim 1}, we have
 $$2\sum_{i=1}^{\frac{k-1}{2}} 2\cos\frac{2i\pi}{n} > 2\sum_{i=1}^{\frac{k-1}{2}}\left(\cos\frac{2i\pi}{n+1} + \cos\frac{(2i+1)\pi}{n+1}\right),$$
 and since $2 > 2\cos\frac{\pi}{n+1}$, it follows that $S_k(C_n) > S_k(P_n)$.

 \vspace*{3mm}
  \noindent
 ${\bf Case\,2.}$ $k$ is even. Then
 $$S_k(C_n) = 2\left(1+\cos\frac{k\pi}{n}\right) + 2\sum_{i=1}^{\frac{k}{2}-1} 2\cos\frac{2i\pi}{n},~~\mbox{and} $$
 $$ S_k(P_n) = 2\left(\cos\frac{\pi}{n+1} + \cos\frac{k\pi}{n+1}\right) + 2\sum_{i=1}^{\frac{k}{2}-1}\left(\cos\frac{2i\pi}{n+1} + \cos\frac{(2i+1)\pi}{n+1}\right).$$

 \vspace*{3mm}  
  \noindent
 ${\bf Claim\,2.}$ $1 + \cos\frac{k\pi}{n} > \cos\frac{\pi}{n+1} + \cos\frac{k\pi}{n+1}$.

 \vspace*{3mm} 
  \noindent\textbf{Proof of Claim 2.} Set $x=\pi/(n+1)$. Since $n\ge 20$ and $0<x<1$,  Lemma \ref{lm5} gives
$1-\cos x\ge x^2/2-x^4/24>x^2/3$. On the other hand,
\begin{align*}
 \cos(kx)-\cos\frac{k\pi}{n}
 &=\int_{kx}^{k\pi/n}\sin t\,dt\\
 &\leq \int_{kx}^{k\pi/n}t\,dt\\
 &\le\frac{k^2\pi^2}{n^2(n+1)}
 =x^2\frac{k^2(n+1)}{n^2}
 \le x^2\frac{21}{10\pi^2}<\frac{x^2}{3},
\end{align*}
 which completes the proof of \textbf{Claim 2}.
 
 \noindent
 By \textbf{Claims 1} and \textbf{2}, we obtain $S_k(C_n) > S_k(P_n)$.
\end{proof}

 By combining Theorem \ref{lms8} with Lemmas \ref{lms9} and \ref{lms10}, we obtain the main result of this section.
 \begin{theorem}\label{lms11} Let $G$ be a connected graph with $n \geq (2k+1)^{20+23(k-1)r_{0.0097}}$. Then $S_k(G)\geq S_k(P_n)$ with equality holding if and only if $G\cong P_n$.
\end{theorem}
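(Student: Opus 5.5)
The proof combines Theorem~\ref{lms8} with the two comparison lemmas. We split according to whether $S_k(G)<2k$ or not, and we first check that the order hypothesis of Theorem~\ref{lms11} implies the hypotheses of Lemmas~\ref{lms9} and~\ref{lms10}.

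\textbf{Order conditions.} Since $r_{0.0097}\geq 1$ and $k\geq 2$,
\[
n\geq (2k+1)^{20+23(k-1)r_{0.0097}}\geq (2k+1)^{43}.
\]
This is far larger than both $5k$ and $\frac{\pi^2}{2}k^2<5k^2$. Hence Lemmas~\ref{lms9} and~\ref{lms10} apply to every $n$ in the range of the statement.

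\textbf{Case $S_k(G)\geq 2k$.} The path is connected and $\lambda_1(P_n)=2\cos\frac{\pi}{n+1}<2$. Since $\lambda_i(P_n)\leq\lambda_1(P_n)$ for all $i$, we get $S_k(P_n)<2k$ (this is also the sufficiency part of Lemma~\ref{lms7}). Therefore
\[
S_k(G)\geq 2k>S_k(P_n),
\]
so the inequality is strict and $G\not\cong P_n$.

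\textbf{Case $S_k(G)<2k$.} By Theorem~\ref{lms8}, which applies under exactly the stated bound on $n$, we have $G\in\{Y_n,W_n,P_n,C_n\}$.
\begin{itemize}
\item If $G\cong Y_n$ or $G\cong W_n$, Lemma~\ref{lms9} gives $S_k(G)>S_k(P_n)$.
\item If $G\cong C_n$, Lemma~\ref{lms10} gives $S_k(G)>S_k(P_n)$.
\item The only remaining possibility is $G\cong P_n$, where equality holds trivially.
\end{itemize}

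Combining the two cases, $S_k(G)\geq S_k(P_n)$ for every connected $G$ of order $n$, with equality if and only if $G\cong P_n$.

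The heavy lifting is already contained in Theorem~\ref{lms8}. The only point needing care is verifying the order conditions, and those hold with enormous room to spare.
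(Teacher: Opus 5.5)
Your proof is correct and follows the paper's own argument: the paper obtains Theorem~\ref{lms11} directly by combining Theorem~\ref{lms8} with Lemmas~\ref{lms9} and~\ref{lms10}. You additionally spell out two routine points the paper leaves implicit. First, $S_k(P_n)<2k$ settles every $G$ with $S_k(G)\geq 2k$. Second, the order bound $n\geq(2k+1)^{43}$ dominates both $5k$ and $\frac{\pi^2}{2}k^2$, so Lemmas~\ref{lms9} and~\ref{lms10} apply.
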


\begin{remark}
{\rm A conjecture concerning the minimum of $S_2(G)$ over connected graphs was proposed in \cite[Conjecture 5.2]{KLMPT} and was recently proved by us in \cite{SMD1}. Theorem~\ref{lms11} extends this result from $S_2(G)$
to $S_k(G)$ for general $k$.}
\end{remark}

The preceding theorem also yields the following weighted version.
\begin{theorem}\label{suncor1}
 let $G$ be connected graph of order $n\ge (2k+1)^{20+23(k-1)r_{0.0097}}$, and let
$a_1\ge a_2\ge\cdots\ge a_k\ge0$ with $a_1>0$. Then
$$\sum_{i=1}^{k}a_i\lambda_i(P_n)\le\sum_{i=1}^{k}a_i\lambda_i(G).$$
Moreover, equality holds if and only if $G\cong P_n$.
\end{theorem}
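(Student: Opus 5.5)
Our plan is to reduce the weighted inequality to the unweighted inequalities $S_j(G)\ge S_j(P_n)$ for $j=1,\ldots,k$ by Abel summation. Set $a_{k+1}=0$ and write
\[
\sum_{i=1}^{k}a_i\lambda_i(G)=\sum_{j=1}^{k}(a_j-a_{j+1})S_j(G),
\]
and similarly for $P_n$. Each coefficient $a_j-a_{j+1}$ is nonnegative, and at least one of them is positive because $a_1>0$. The theorem then follows once we know that $S_j(G)\ge S_j(P_n)$ for every $1\le j\le k$, with strict inequality for every $j$ whenever $G\not\cong P_n$. Indeed, in that case the weighted difference is a nonnegative combination of strictly positive quantities with at least one positive coefficient. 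Conversely, equality for $G\cong P_n$ is trivial.

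For $2\le j\le k$ we invoke Theorem~\ref{lms11} with $k$ replaced by $j$. This needs the order condition $n\ge (2j+1)^{20+23(j-1)r_{0.0097}(j)}$, where $r_\varepsilon$ depends on the index through $k-1$. We will check that this threshold is monotone in the index. The base $2j+1$ increases with $j$. The quantity $\varepsilon/(j-1)$ decreases with $j$, so $\arccos(1-\varepsilon/(j-1))$ decreases and $r_\varepsilon$ increases. Hence the hypothesis for $k$ implies the hypothesis for every $j\le k$.

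The case $j=1$ is not covered by Theorem~\ref{lms11}, since $k\ge2$ there. Here we use the classical fact that $\lambda_1(G)\ge\lambda_1(P_n)$ for connected $G$, with equality only for $P_n$. This holds because $G$ contains a spanning tree $T$, and $\lambda_1(T)\ge\lambda_1(P_n)$ with equality only for the path; alternatively, Perron--Frobenius together with Lemma~\ref{lm3} gives strict monotonicity under adding edges. For a tree $T\ne P_n$, Lemma~\ref{lm4} gives $\lambda_1(Y_n)>\lambda_1(P_n)$, since $Y_n$ is $P_n$ with an edge moved, and every other connected graph has $\lambda_1\ge2>\lambda_1(P_n)$. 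So $G=P_n$ is the unique minimizer of $\lambda_1$. To compare $Y_n$ with $P_n$ concretely, we can use the Rayleigh quotient argument from Lemma~\ref{lms9} with $j=1$.

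The main obstacle is the bookkeeping of the dependence of $r_{0.0097}$ on the index, that is, confirming the monotonicity of the threshold. The strictness for $j=1$ is the other delicate point; it can be covered as above, or bypassed when $a_1=a_2$. Everything else is the summation-by-parts identity.
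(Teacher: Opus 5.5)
Your proof is correct and follows the same route as the paper. Abel summation with $b_j=a_j-a_{j+1}\ge 0$ reduces the weighted statement to $S_j(G)\ge S_j(P_n)$ from Theorem~\ref{lms11}, and equality is forced through an index with $b_j>0$. Your two extra checks fill in steps the paper's proof uses without comment. The order threshold is nondecreasing in the index. The case $j=1$ needs separate treatment, because Theorem~\ref{lms11} is stated for $k\ge 2$ and $r_\varepsilon$ is undefined there. In that case Lemma~\ref{lm4} only gives $\lambda_1\ge 2$ off $\{P_n,Y_n\}$, not the comparison $\lambda_1(Y_n)>\lambda_1(P_n)$. That comparison comes from the one-vector Rayleigh quotient of Lemma~\ref{lms9}, as you indicate.
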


\begin{proof}
Let $a_{k+1}=0$ and define
\[
b_i=a_i-a_{i+1},
\qquad
i=1,\ldots,k.
\]
Then $b_i\geq 0$ for all $i$. Since $a_{k+1}=0$, we obtain
\begin{align*}
\sum_{i=1}^k\,b_i\,S_i(G)&=\sum_{i=1}^k\,(a_i-a_{i+1})\,\sum\limits_{j=1}^i\,\lambda_j(G)\\[2mm]
&=\sum_{i=1}^k\,a_i\,\sum\limits_{j=1}^i\,\lambda_j(G)-\sum_{i=1}^k\,a_{i+1}\,\sum\limits_{j=1}^i\,\lambda_j(G)\\[2mm]
&=\sum_{i=1}^k\,a_i\,\lambda_i(G)+\sum_{i=2}^k\,a_i\,\sum\limits_{j=1}^{i-1}\,\lambda_j(G)-\sum_{i=1}^{k-1}\,a_{i+1}\,\sum\limits_{j=1}^i\,\lambda_j(G).
\end{align*}
Since 
  $$\sum_{i=2}^k\,a_i\,\sum\limits_{j=1}^{i-1}\,\lambda_j(G)=\sum_{i=1}^{k-1}\,a_{i+1}\,\sum\limits_{j=1}^{i}\,\lambda_j(G),$$
from the above, we have 
$$\sum_{i=1}^k\,b_i\,S_i(G)=\sum_{i=1}^k\,a_i\,\lambda_i(G).$$
By Theorem~\ref{lms11}, we have $S_i(P_n)\le S_i(G)$ for every $i=1,\ldots,k$. Thus we obtain
$$\sum_{i=1}^k\,a_i\,\lambda_i(P_n)
=\sum_{i=1}^k\,b_i\,S_i(P_n)
\le \sum_{i=1}^k\,b_i\,S_i(G)
=\sum_{i=1}^k\,a_i\,\lambda_i(G)$$
with equality holding if and only if $b_i\,S_i(G)=b_i\,S_i(P_n)$ for $i=1,\,2,\ldots,\,k$.
Since $a_1>0$ and $a_{k+1}=0$, then  there exists an index $t$
such that $b_t>0$. Thus $S_t(G)=S_t(P_n)$.
Therefore, by Theorem~\ref{lms11}, we conclude that $G\cong P_n$.
\end{proof}

\begin{remark}{\rm The authors of \cite[Problem 7.2(i)]{KMPZ} raised the question of whether the path \(P_n\) minimizes $\alpha\lambda_1(G)+(1-\alpha)\lambda_2(G)$ for $\alpha \in (0.5,1)$ among all trees of order $n$. Theorem \ref{suncor1} resolves this question in a more general form: it establishes the desired inequality for all connected graphs of sufficiently large order.}
\end{remark}

 \section{Characterization of graphs in the first Hoffman-type range for eigenvalue sum}
In the previous section, we characterized the connected graphs in the
subcritical range $S_k(G)<2k$. We now move to the first Hoffman-type
range above the threshold $2k$. Motivated by the classical interval
between $2$ and $\eta=\sqrt{2+\sqrt5}$ for the adjacency spectral radius,
we consider connected graphs satisfying
\[
2k\leq S_k(G)<2k+\eta-2.
\]
 We first consider connected graphs containing a cycle.
 \begin{lemma}\label{smlm1}
 Let $G$ be a connected graph with $m \ge n \geq (2k+1)^{21+25(k-1)r_{0.0009}}$. 
 If $S_k(G)\in [2k,\, 2k + \eta - 2)$, then $G\cong U_{n-1}$.
 \end{lemma}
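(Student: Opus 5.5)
We argue by contradiction, in the same way as Lemma~\ref{lms6}. We apply Lemma~\ref{lms5.1} with $a=\eta$, so that $2k-2+a=2k+\eta-2$, and show that every unicyclic-or-denser graph other than $U_{n-1}$ contains a small subgraph $H$ with $\lambda_1(H)\ge \eta+2\varepsilon$ for $\varepsilon=0.0009$. We take $q\le 21$ so that the order hypothesis $n\ge (2k+1)^{21+25(k-1)r_{0.0009}}\ge (2k+1)^{q-2+(q+1)(k-1)r_{0.0009}}$ is met; this requires $q\le 23$, and $q\le 21$ is convenient. Note that $\eta\approx 2.058$, and $\lambda_1(U_{10})\approx 2.058$ as well, which is exactly why short cycles behave differently here than in Lemma~\ref{lms6}.

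First we handle the cycle $C_n$. Since $\lambda_1(C_n)=2$ and $\lambda_i(C_n)<2$ for $i\ge2$, we have $S_k(C_n)<2k$, so $C_n$ is excluded. Next let $C_t$ be a shortest cycle, so $t\le n-1$ and $U_t\subseteq G$. If $t$ is large, say $t\ge 20$, then $G$ contains the tree obtained from a long path by attaching pendant paths. Then $G$ contains $T_{1,a,b}$ with $a,b$ large, which is above $2$ but tends to $\eta$ from below, so that subgraph alone is not enough. We therefore need more structure. Since $G\ne C_n$ and $t\le n-1$, either $G$ is exactly $C_{n-1}$ plus a pendant edge (that is, $U_{n-1}$), or one of the following holds: a vertex outside the cycle has distance $2$ from it, two vertices are attached to the cycle, or there is an extra edge or chord. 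In each such case we locate, inside a window of at most $21$ vertices around the attachment points, a subgraph $H$ exceeding $\eta$. The candidates are a $T_{2,a,b}$-type tree (for instance $T_{2,5,5}$ or $T_{2,2,\ell}$), a tree with two branch vertices $P^{b,b+c}_{1,1,p}$ whose spacing violates the condition of Lemma~\ref{lm9}, or $T_{1,1,\dots}$-type configurations with degree-$4$ vertices. We certify $\lambda_1(H)\ge \eta+0.0018$ by SAGE, as in the earlier lemmas. The key point is that the two branch points lie at bounded distance on the cycle, or else a short path of length at most $8$ joins them. If they are far apart along the cycle, we instead use two disjoint subgraphs $H_1,H_2$, each a $T_{1,5,5}$ with $\lambda_1\ge2.042$, and apply Lemma~\ref{lms5.2} with $b=4$ after checking $2\cdot2.042\ge 4+\eta-2+2\varepsilon$. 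This succeeds since $4.084>4.058+0.0018$, and it gives $S_k(G)\ge 2k+\eta-2$.

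When $t$ is small ($3\le t\le 20$), $U_t$ itself, or $U_t$ together with one further vertex (which exists because $n$ is huge and $G$ is connected), contains a graph with $\lambda_1>\eta+2\varepsilon$. This follows because $\lambda_1(U_t)$ decreases to $\eta$ as $t$ grows, and for $t$ near $20$ adding any extra vertex to $U_t$ pushes $\lambda_1$ clearly above $\eta$. The only graph escaping all these configurations is a single cycle through all but one vertex with one pendant vertex, namely $U_{n-1}$.

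The main obstacle is the case analysis for long girth. There, local subgraphs approach $\eta$ only from below, so we must show that any second deviation from $U_{n-1}$ is either close to the first, giving a small $H$ with $\lambda_1>\eta$ by Hoffman-type computation, or far from it, where the disjoint pair argument via Lemma~\ref{lms5.2} applies. Making the threshold between "close" and "far" consistent with the vertex budget of $21$ is the delicate point.
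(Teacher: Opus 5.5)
You follow the paper's framework: a shortest cycle $C_t$, Lemma~\ref{lms5.1} with $a=\eta$, and Lemma~\ref{lms5.2} when two branch points are far apart. However, the decisive case is only sketched, and the concrete choice you commit to does not close it.

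\textbf{The gap.} The reduction the paper uses (missing from your sketch) is this. For $t\ge 11$, an edge of $G-V(C_t)$ yields $T_{2,4,4}$ with $\lambda_1>2.074$. Otherwise, minimality of $C_t$ forces every vertex off the cycle to be a pendant vertex, so $n\le 2t$ and the cycle is enormous. What remains are closed quipus with single pendant vertices.

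Now take exactly two attachment points $u_1,u_2$ at cycle distance $d=10$.
\begin{itemize}
\item Your two copies of $T_{1,5,5}$ must be centred at $u_1,u_2$ with both $5$-arms along the cycle, so they are disjoint only when $d\ge 11$.
\item Inside your window of at most $21$ vertices, every connected subgraph lies in a path with two pendant vertices $10$ apart on at most $21$ vertices. By Lemma~\ref{lm9} ($2c=20\ge n-1$), all such trees have $\lambda_1\le\eta$. A direct elimination at $\lambda=\eta$ for every split of the two tails confirms this.
\end{itemize}
So $d=10$ is covered by neither branch.

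It can be repaired only by actually fixing the threshold, in one of two ways:
\begin{itemize}
\item Use the full budget $q\le 23$ that the hypothesis permits. For example, $H=P^{5,15}_{1,1,21}$ has $\lambda_1>2.06>\eta+0.0018$.
\item Follow the paper and replace $T_{1,5,5}$ by $T_{1,3,7}$. Its short arms make two copies disjoint once $d\ge 8$, and $2\lambda_1(T_{1,3,7})>4.06>2+\eta+0.0018$. The case $d\le 7$ is then settled by $P^{1,d+1}_{1,1,20}$ with $\lambda_1>2.06$.
\end{itemize}
These margins are of order $10^{-4}$ to $10^{-3}$, so the close/far split cannot be left qualitative.

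\textbf{Other incorrect assertions.}
\begin{itemize}
\item $\lambda_1(U_{10})>2.074$, not $\approx 2.058$. This is why the paper disposes of $3\le t\le 10$ directly with $\varepsilon=0.007$.
\item Your range $t\le 20$ is borderline, since $\lambda_1(U_{20})-\eta\approx 0.0017<2\varepsilon$. Your unverified ``one further vertex'' step is therefore genuinely needed there.
\item $T_{2,2,\ell}$ can never be a certificate: $\lambda_1(T_{2,2,\ell})<\eta$ for every $\ell$, as it appears in Lemma~\ref{lm9}.
\item In Lemma~\ref{lms5.2} you need $b=2+\eta$, not $b=4$; with $b=4$ you only obtain $S_k(G)\ge 2k$.
\item A shortest cycle has no chords.
\item The case $\Delta\ge 4$ needs an explicit subgraph, e.g.\ $K_{1,4}$ with a pendant edge at a leaf, whose spectral radius is $\sqrt{(5+\sqrt{13})/2}>2.074$.
\end{itemize}
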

 
 \begin{proof} We consider the following cases according to the maximum value of $\Delta$.

 \vspace*{3mm}

 \noindent
 ${\bf Case\,1:}$ $\Delta \ge 4$. Similar to the case $\Delta\geq 4$ in the proof of Lemma \ref{lms7},  $G$ contains a subgraph $T'$ of order $6$ with $\lambda_1(T') \geq 2.074$.
 Setting $H=T'$, $a=\eta$ and $\varepsilon = 0.007$ in Lemma \ref{lms5.1}, together with the fact that $$n \ge (2k+1)^{21+25(k-1)r_{0.0009}}> (2k+1)^{4+7(k-1)r_{0.007}},$$
 we get $S_k(G) \ge 2k + \eta - 2$, a contradiction as $S_k(G)\in [2k,\, 2k + \eta - 2)$. 

  \vspace*{3mm}

 \noindent
 ${\bf Case\,2:}$ $\Delta \leq 3$. For $\Delta = 2$, we have $G\cong C_n$, and by Lemma \ref{lms6}, we obtain $S_k(G)<2k$,  a contradiction as $S_k(G)\in [2k,\, 2k + \eta - 2)$. It remains to consider the case $\Delta = 3$. Let $C_t$ be a shortest cycle in $G$. Then there exists a vertex $x \in V(C_t)$ with $d(x) = 3$. So there exists $y \in V(G) \setminus V(C_t)$ such that $xy \in E(G)$. Therefore, $U_t$ is a subgraph of $G$. Now we divide the rest of the proof into two cases based on $t$.

 \vspace*{3mm}
 
  \noindent
  ${\bf Case\,2.1:}$ $3 \le t \le 10$. Since $U_t$ is a subgraph of $G$, we obtain $\lambda_1(U_t) \ge \lambda_1(U_{10}) > 2.074$.
  Setting $H=U_t$, $a=\eta$ and $\varepsilon = 0.007$ in Lemma \ref{lms5.1}, we obtain $S_k(G) \ge 2k + \eta - 2$, a contradiction as $S_k(G)\in [2k,\, 2k + \eta - 2)$.

 \vspace*{3mm}
  \noindent
  ${\bf Case\,2.2:}$ $t \geq 11$. If $G[V(G) \setminus V(C_t)]$ contains at least one edge, then $T_{2,4,4}$ is a subgraph of $G$ and hence $\lambda_1(T_{2,4,4})> 2.074$.
   Setting $H=T_{2,4,4}$, $a=\eta$ and $\varepsilon = 0.007$ in Lemma \ref{lms5.1},
 we obtain $S_k(G) \ge 2k + \eta - 2$, a contradiction as $S_k(G)\in [2k,\, 2k + \eta - 2)$. Otherwise, $G[V(G) \setminus V(C_t)]$ contains only isolated vertices, which means that $N_G(v)\subset{V(C_t)}$ for any $v\in V(G)\setminus V(C_t)$. As $C_t$ is defined as a shortest cycle in $G$, then $d_G(v)=1$ for any $v\in V(G)\setminus V(C_t)$.
 Therefore, $G$ is a closed quipu and all vertices having degree at least two are in the cycle $C_t$. Then we have $3t\geq n \ge (2k+1)^{21+25(k-1)r_{0.0009}}>90$.
 If there is exactly one vertex of degree $3$ in $C_t$, then $G\cong U_{n-1}$. Otherwise, there exists at least two vertices $u_1, u_2 \in V(C_t)$ such that $d_G(u_1) = d_G(u_2) = 3$. Let $d = d(u_1,\ u_2)$. We will show it is impossible for any $d$ by considering the following two subcases.
  
 \vspace*{3mm}
  \noindent
  ${\bf Case\,2.2.1:} $ $d \le 7$. Since $t>30$, $G$ contains a subgraph $P^{1,\,d+1}_{1,1,20}$. By SAGE \cite{SAGE}, one can easily check that $\lambda_1(P^{1,d+1}_{1,1,20}) \geq \lambda_1(P^{1,\,8}_{1,1,20}) > 2.06$ for $1\leq d\leq 7$. 
  Setting $H=P^{1,\,d+1}_{1,1,20}$, $a = \eta$ and $\varepsilon = 0.0009$ in Lemma \ref{lms5.1}, 
  we get $S_k(G) \geq 2k + \eta - 2$, a contradiction as $S_k(G)\in [2k,\, 2k + \eta - 2)$.
  
 \vspace*{3mm}
  \noindent
 ${\bf Case\,2.2.2:}$ $d \geq 8$. As $t>30$, there exist two vertex-disjoint copies of $T_{1,3,7}$ rooted at $u_1$ and $u_2$, respectively, and $\lambda_1(T_{1,3,7})> 2.03$.
 Setting $H_1=H_2=T_{1,3,7}$, $b =2+ \eta$ and $\varepsilon = 0.0009$ in Lemma \ref{lms5.2} , together with the fact that $$n \ge (2k+1)^{21+25(k-1)r_{0.0009}}> (2k+1)^{21+25(k-2)\theta_{0.0009}},$$ 
 we have $S_k(G) \ge 2k + \eta - 2$,  again a contradiction as $S_k(G)\in [2k,\, 2k + \eta - 2)$. This completes the proof.
\end{proof}

 \begin{theorem}\label{smlm6}
 Let $G$ be a connected graph with $m \ge n$. For sufficiently large $n$ relative to $k$,  we have $S_k(G)\in [2k,\, 2k + \eta - 2)$ if and only if $G \cong U_{n-1}$.
 \end{theorem}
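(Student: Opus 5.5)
Necessity is exactly Lemma~\ref{smlm1}: once $n\ge (2k+1)^{21+25(k-1)r_{0.0009}}$, every connected $G$ with $m\ge n$ and $S_k(G)\in[2k,2k+\eta-2)$ satisfies $G\cong U_{n-1}$. So the remaining work is sufficiency. We must show that $U_{n-1}$ really lies in the range for large $n$, namely
\[
2k\le S_k(U_{n-1})<2k+\eta-2 .
\]

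For the lower bound, $U_{n-1}$ contains the cycle $C_{n-1}$ as an induced subgraph. By Corollary~\ref{lm7} and Lemma~\ref{lm22},
\[
S_k(U_{n-1})\ge S_k(C_{n-1})=2+\sum_{i=2}^{k}\lambda_i(C_{n-1}),
\]
but the right-hand side is strictly below $2k$, so this alone is not enough. I would instead use Ky Fan (Lemma~\ref{lm1}) with explicit test vectors supported on the cycle plus the pendant vertex. For $\lambda_1$ we have $\lambda_1(U_{n-1})>2$ by Lemma~\ref{lm3}, since $U_{n-1}$ properly contains $C_{n-1}$ and is connected. For the rest I would use interlacing with respect to deleting the pendant vertex. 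The eigenvalues of $C_{n-1}$ come in pairs $2\cos(2i\pi/(n-1))$ of multiplicity $2$. Within each pair, one eigenvector can be rotated to vanish at the attachment vertex, so that eigenvalue persists in $U_{n-1}$. The other eigenvector is pushed upward by the pendant edge. The point is the precise amount of that upward push.

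The cleanest route is the characteristic polynomial. Write $\phi(U_{n-1},x)=x\,\phi(C_{n-1},x)-\phi(P_{n-2},x)$ and substitute $x=2\cos\theta$. The eigenvalues of $U_{n-1}$ then interlace those of $C_{n-1}$: one lies above $2$, and the others lie in the intervals $(2\cos(2(i)\pi/(n-1)),\,2\cos(2(i-1)\pi/(n-1)))$. Compare this with the obvious subgraph lower bound $S_k\ge\lambda_1(T_{1,a,b})+(k-1)\cdot(\text{long path})$. The natural expectation is $S_k(U_{n-1})\to 2k-2+\lambda_\infty$ from above as $n\to\infty$, where $\lambda_\infty$ is the limit of $\lambda_1(U_{n-1})$. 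Here $\lambda_1(U_{n-1})\to\lambda$ with $\lambda+1/\lambda$-type equation: for a pendant edge on an infinite line, $\lambda_1=\sqrt5$? Checking: the limit graph is $T_{1,\infty,\infty}$, whose spectral radius tends to $\eta$ from below. That matches the threshold. So both bounds come down to estimating, to order $O(1/n)$, how far each perturbed eigenvalue $\lambda_{2i}$ or $\lambda_{2i+1}$ lies above its cycle value.

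Concretely, I would show $\lambda_1(U_{n-1})<\eta$ by Lemma~\ref{lm9}-type reasoning or by the explicit secular equation. Then I would show that the other $k-1$ top eigenvalues sum to at least $2k-2-(\lambda_1-2)$ and at most $2(k-1)$. The upper bound is immediate because all of them are $\le 2$; hence $S_k<\eta+2(k-1)$. For the lower bound, each top eigenvalue other than $\lambda_1$ is $\ge 2\cos(c k\pi/n)=2-O(k^2/n^2)$. Meanwhile $\lambda_1-2$ tends to the positive constant $\eta-2$, so for $n$ large the sum is $\ge 2k$. The main obstacle is making this last estimate rigorous. It needs interlacing, so that $\lambda_{j}(U_{n-1})\ge\lambda_j(C_{n-1})\ge 2\cos(2\lceil (j-1)/2\rceil\pi/(n-1))$, together with a uniform lower bound like $\lambda_1(U_{n-1})\ge\lambda_1(T_{1,5,5})\ge 2.042$. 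The final inequality $2.042+2(k-1)-O(k^3/n^2)\ge 2k$ then holds for large $n$ via Lemma~\ref{lm5}.
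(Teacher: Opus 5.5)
Your citation of Lemma~\ref{smlm1} for necessity is fine, and your lower bound also works: $\lambda_1(U_{n-1})\ge\lambda_1(T_{1,5,5})$ together with interlacing against $C_{n-1}$ is a valid alternative to the paper, which simply invokes Lemma~\ref{lms6} since $U_{n-1}\not\cong C_n$.

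The gap is in the upper bound. The step ``show $\lambda_1(U_{n-1})<\eta$'' is false: $\lambda_1(U_{n-1})>\eta$ for every $n$. The cycle of $U_N$ is an internal path whose two ends coincide at the degree-$3$ vertex, so Lemma~\ref{lm8} gives $\lambda_1(U_{N+1})<\lambda_1(U_N)$. The sequence is therefore strictly decreasing with limit $\eta$, so it approaches $\eta$ from \emph{above}. Equivalently, every graph of order at least $18$ listed in Lemma~\ref{lm9} is a tree, so the unicyclic graph $U_{n-1}$, which has $\lambda_1>2$, must have $\lambda_1>\eta$.

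Your limit heuristic conflated $U_{n-1}$ with the trees $T_{1,a,b}$, which do approach $\eta$ from below. So ``$\lambda_1<\eta$ and $\lambda_i\le 2$ for $i\ge 2$'' is not available, and $S_k<\eta+2(k-1)$ is not immediate at all. Likewise, your expectation that $S_k(U_{n-1})$ tends to $2k-2+\eta$ from above would contradict the very statement you are proving.

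What is missing is a quantitative comparison of two small quantities.

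\begin{itemize}
\item On one side, solve the eigenvalue equations along the cycle, writing $x_i=A\alpha^i+B\beta^i$. This yields $2\beta+2\beta^{N-1}-(\lambda-\lambda^{-1})(1+\beta^N)=0$. Its limit identifies $\eta$, and it shows $0<\lambda_1(U_N)-\eta=O((\beta^*)^N)$ with $\beta^*<1$, an exponentially small excess.
\item On the other side, deleting the degree-$3$ vertex of $U_{n-1}$ leaves $P_{n-2}\cup K_1$. Interlacing (Lemma~\ref{lm6}) and Lemma~\ref{lm5} then give $\lambda_2(U_{n-1})\le 2\cos\frac{\pi}{n-1}\le 2-\frac{c}{n^2}$, while $\lambda_i<2$ for $i\ge 3$.
\end{itemize}

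Only because this polynomial deficit of order $n^{-2}$ dominates the exponential excess does one obtain $S_k(U_{n-1})<2k+\eta-2$ for large $n$. Any correct proof needs at least $\lambda_1(U_{n-1})-\eta=o(n^{-2})$. Your proposal gives no rate for $\lambda_1$ at all, so the sufficiency direction is not established.
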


 \begin{proof}
 The necessity follows from Lemma~\ref{smlm1}. It remains to prove
the sufficiency. For convenience, we take $U_n$ of order $n+1$.

 \vspace*{3mm}
   \noindent
   ${\bf Step\,1.}$ Determine the $\lim_{n\to\infty}\lambda_1(U_n)$.
   As $n$ is sufficiently large, then $T_{1,5,5}$ is a subgraph of $U_n$. Thus $\lambda_1(U_n) \ge \lambda_1(T_{1,5,5})>2$ by Lemma \ref{lm3}.  Moreover, by Lemma \ref{lm8}, we have $\lambda_1(U_n) >\lambda_1(U_{n+1})$.
    These results imply that  $\lim_{n\to\infty}\lambda_1(U_n)$ exists, say $\lambda^*$.
   Let $V(U_n) = \{u,\,v_0, v_1, v_2, \dots, v_{n-1}\}$ such that $V(C_n)=V(U_n)\setminus \{u\}$ and $uv_0 \in E(U_n)$. Assume that ${\bf x}=(y, x_0, x_1, \dots, x_{n-1})^T$ is the eigenvector corresponding to $\lambda=\lambda_1(U_n)$. 
   From $A(U_n){\bf x}=\lambda {\bf x}$, we have   
 \begin{equation}
 \begin{cases}\label{sys11}
  \lambda y = x_0, \\
 \lambda x_0 = x_1 + x_{n-1} + y, \\
  \lambda x_i = x_{i-1} + x_{i+1}, ~~i=1,\,2,\ldots,n-2, \\
 \lambda x_{n-1} = x_0 + x_{n-2}.
 \end{cases}
\end{equation}
Given the recurrence relation 
 $\lambda x_i = x_{i-1} + x_{i+1}$ for $1 \le i \le n-2$, its characteristic equation is
  $x^2 - \lambda x + 1 = 0$. As $\lambda>2$, the general solution is
 $x_i = A\alpha^i + B\beta^i$ for $i=0,\,1,\ldots,n-1$, 
 where
 $\alpha = \frac{\lambda + \sqrt{\lambda^2 - 4}}{2} > 1, \quad 0 < \beta = \frac{\lambda - \sqrt{\lambda^2 - 4}}{2}=\frac{2}{\lambda + \sqrt{\lambda^2 - 4}}< 1.$
Thus we have $\alpha\,\beta=1$ and $\alpha+\beta=\lambda$, where $\alpha>1$ and $0<\beta<1$. Due to the above fact, the system (\ref{sys11}) is equivalent to the following one:
$$
  \begin{cases}\label{sys1}
 \lambda x_0 = x_1 + x_{n-1} + \frac{x_0}{\lambda}, \\[1mm]
   \lambda x_{n-1} = x_0 + x_{n-2}, \\[1mm]
   x_i =A\alpha^i + B\beta^i, ~~i=0,\,1,\ldots,n-1.
 \end{cases}
 $$
From the above, we obtain
\begin{align*}
\lambda\,(A\alpha^{n-1} + B\beta^{n-1})=A+B+A\alpha^{n-2} + B\beta^{n-2},~\mbox{ that is, }~A\,(\alpha^n-1) + B\,(\beta^n-1)=0
\end{align*}
as $\lambda=\alpha+\beta$ and $\alpha\,\beta=1$. Moreover, 
\begin{align*}
&\lambda\,(A+B)=A\alpha+B\beta+A\alpha^{n-1} + B\beta^{n-1}+\frac{A+B}{\lambda},\\[2mm]
\mbox{ that is, }&~A\left(\alpha + \alpha^{n-1} - \lambda + \frac{1}{\lambda}\right) + B\left(\beta + \beta^{n-1} - \lambda + \frac{1}{\lambda}\right) = 0.
\end{align*}
 Note that $(A,\,B)\neq (0,\,0)$ (otherwise, $x_i=0$ for every $i$, and then ${\bf x}$ is zero vector, a contradiction).
 Then the above linear system has nonzero solutions, which implies that 
 $$
 \begin{vmatrix}
 \alpha^n - 1 & \beta^n - 1 \\[2mm]
 \alpha + \alpha^{n-1} - \lambda + \frac{1}{\lambda} & \beta + \beta^{n-1} - \lambda + \frac{1}{\lambda}
 \end{vmatrix}
 = 0.
 $$
 Expanding the determinant yields
 $$(\alpha^n - 1)\,\Big(\beta + \beta^{n-1} - \lambda + \frac{1}{\lambda}\Big)-(\beta^n - 1)\,\Big(\alpha + \alpha^{n-1} - \lambda + \frac{1}{\lambda}\Big)=0.$$
 Since $\alpha\,\beta=1$, from the above, we obtain
 $$\left(\frac{1}{\beta^n}-1\right)\,\Big(\beta + \beta^{n-1} - \lambda + \frac{1}{\lambda}\Big)-(\beta^n - 1)\,\left(\frac{1}{\beta}+\frac{1}{\beta^{n-1}}- \lambda + \frac{1}{\lambda}\right)=0,$$
 that is,
 $$\left(\frac{1}{\beta^n}-1\right)\,\left(2\beta + 2\beta^{n-1} - \Big(\lambda - \frac{1}{\lambda}\Big)\,(\beta^n+1)\right)=0.$$
Since $\beta<1$, from the above, we have
 \begin{equation}
2\beta + 2\beta^{n-1} - \Big(\lambda-\frac{1}{\lambda}\Big)\,(\beta^n+1)=0.\label{sss1}
\end{equation}
 Taking the limit on both sides as $n \to \infty$, we have 
 $$2\,\lim_{n \to \infty}\,\beta-\left(\lambda^*-\frac{1}{\lambda^*}\right)=0,~\mbox{ that is, }~
 \left(\lambda^* - \frac{1}{\lambda^*}\right)\left(\frac{\lambda^* + \sqrt{(\lambda^*)^2 - 4}}{2}\right) = 2
 $$
 as
 $$\lim_{n \to \infty}\,\lambda=\lambda^*,\,\lim_{n \to \infty}\,\beta=\frac{2}{\lambda^* + \sqrt{(\lambda^*)^2 - 4}},~\lim_{n \to \infty}\,\beta^n=\lim_{n \to \infty}\,\beta^{n-1}=0.$$
 From the above equation, we get the unique root  $\eta$, which is greater than $2$.
 Hence $\lambda^*=\eta$.

\vspace*{3mm}
   \noindent
   ${\bf Step\,2.}$ Estimate the rate of convergence of $\lambda_1(U_n)$.
Let $\beta^*=\beta_{\lambda^*}$, where $\beta_{\lambda}=\beta$ is the decreasing function of $\lambda$ defined in \textbf{Step 1}. Recall that $\lambda=\lambda_1(U_n)$ decreases as $n$ increases and $\lim_{n\to\infty}\lambda_1(U_n)=\lambda^*$. Then $\lambda\in (\lambda^*,\,2.17)$ for sufficiently large $n$. This implies that $\beta <\beta^*$. Note that for  $\lambda\in (2,2.17)$, we have 
$$\sqrt{\lambda^2-4}+\frac{2}{\sqrt{\lambda^2-4}}>\sqrt{0.7089}+\frac{2}{\sqrt{0.7089}}>3.2~\mbox{and }\lambda+\frac{1}{\lambda^2}<2.17+0.25=2.42.$$
Then
$$(2-\lambda\beta+\lambda^{-1})^\prime=\sqrt{\lambda^2-4}+\frac{2}{\sqrt{\lambda^2-4}}-\lambda-\frac{1}{\lambda^2}>0,$$
which implies that $2-\lambda\beta+\lambda^{-1}$ is increasing on $\lambda\in (2,2.17)$.
From (\ref{sss1}) with the above fact, we have
  \begin{eqnarray}
  \lambda&= &\beta^{n-1}(2-\lambda\beta+\lambda^{-1}\beta)+\lambda^{-1}+2\beta\nonumber\\
  &<&\beta^{n-1}(2-\lambda\beta+\lambda^{-1})+\lambda^{-1}+2\beta\nonumber\\
        &<&\beta^{n-1}(2-2.17\beta_{2.17}+2.17^{-1})+\lambda^{-1}+2\beta\nonumber\\
        &<&(\beta^*)^{n-1}(2-2.17\beta_{2.17}+2.17^{-1})+(\lambda^*)^{-1}+2\beta^*~~\mbox{as }\lambda>\lambda^*,\,\beta<\beta^*\nonumber\\
        &=&(\beta^*)^{n-1}(2-2.17\beta_{2.17}+2.17^{-1})+\lambda^*.\nonumber
 \end{eqnarray}
Thus we conclude that $\lambda=\lambda^*+O((\beta^*)^n)$, where $\beta^*=\frac{\sqrt{2+\sqrt5}-\sqrt{\sqrt5-2}}{2}$.

\vspace*{3mm}
   \noindent
   ${\bf Step\,3.}$ Prove $S_k(U_n)<2k + \eta - 2$.
Since $U_n$ contains $P_{n-1}\cup K_1$ as an induced subgraph, by Lemmas  \ref{lm5} and \ref{lm6}, we get 
$$\lambda_2(U_n)\leq \lambda_1(P_{n-1})=2\cos\left(\frac{\pi}{n}\right)\leq 2-\frac{\pi^2}{n^2}+\frac{\pi^4}{12n^4},$$
  which implies that $\lambda_i(U_n)<2$ for $i\geq 3$ and there exists $c_1 > 0$ such that $\lambda_2< 2-\frac{c_1}{n^2}$. Combining the above results with the result in \textbf{Step 2}, there exists $c_2$ such that  
  $$S_k(U_n)< \eta+c_2(\beta^*)^{n}+2-c_1/n^2+2(k-2)<2k+\eta-2$$
  as $c_2(\beta^*)^{n}-c_1/n^2<0$ for sufficiently large $n$.
  Moreover, by Lemma \ref{lms6}, we obtain $S_k(U_n)\in [2k,\,2k + \eta - 2)$.
  This completes the proof.
\end{proof}

We next turn to the tree case. Unlike the non-tree case, the graphs
in the target spectral range are not determined uniquely. We show
that every such tree must belong to a small number of explicit families.
\begin{theorem}\label{smlm3}
Let $T$ be a tree with $n \ge (2k+1)^{30+33(k-1)r_{0.00006}}$.
If $S_k(T) \in [2k,\,2k + \eta - 2)$, then $T\in \mathcal{T}$, where
$$\mathcal{T}= \left\{ P_{1,\,1,\,1,\,n-3}^{1,\,a,\,n-5},\, P_{1,\,1,\,n-2}^{1,\,b},\,P_{1,\,1,\,n-2}^{2,\,n-6},\, P_{1,\,1,\,n-2}^{2,\,n-5},\,  P_{1,\,2,\,n-3}^{1,\,n-6},\, T_{2,2,n-5},\,T_{1,c,n-c-2} \right\}$$
with $a\in [4,\,n-8]$, $b\in [4,\,n-5]$ and $c\in [2,\,n-4]$.
\end{theorem}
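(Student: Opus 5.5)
I would follow the same template as Lemmas \ref{lms7} and \ref{smlm1}. Any ``forbidden'' configuration $H$ with $\lambda_1(H)\ge\eta+2\varepsilon$, or any pair of disjoint configurations with $\lambda_1(H_1)+\lambda_1(H_2)\ge 2+\eta+2\varepsilon$, is fed into Lemma \ref{lms5.1} (with $a=\eta$) or Lemma \ref{lms5.2} (with $b=2+\eta$). Either lemma gives $S_k(T)\ge 2k+\eta-2$, which is a contradiction. The bound $n\ge(2k+1)^{30+33(k-1)r_{0.00006}}$ is chosen so that every test subgraph has at most about $30$ vertices, and $\varepsilon=0.00006$ covers the smallest gap above $\eta$ that appears (I expect $\eta\approx 2.0582$ and test values around $2.0584$). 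Theorem \ref{lms8} shows that $T\notin\{P_n,Y_n,W_n\}$. Hence $\lambda_1(T)>2$, so $T$ has a vertex of degree $\ge3$. Also $\Delta\le3$: otherwise the $6$-vertex tree $T'$ of Lemma \ref{lms7}, with $\lambda_1(T')\ge2.074$, gives a contradiction exactly as in Case 1 of Lemma \ref{smlm1}. By Lemma \ref{lms4}, $T$ also contains a path of length greater than $2\log_3 n-2$, which is huge.

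Next, fix a diametral path $P_p$, so that $T$ is a caterpillar-like tree hanging off it. I would show that $T$ is an open quipu: every vertex off $P_p$ lies on a pendant path attached at some degree-$3$ vertex of $P_p$. If some branch off the spine itself branches, then, since the spine is long in both directions, $T$ contains a fixed small tree such as a $T_{1,1,\ldots}$-type ``double fork'' or $T_{2,3,\cdot}$. Its spectral radius exceeds $\eta$ by a margin that SAGE certifies, and Lemma \ref{lms5.1} finishes the case. So $T=P^{m_1,\ldots,m_t}_{n_1,\ldots,n_t,p}$ with $n_i\le\min\{m_i,p-1-m_i\}$.

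Now I would bound the number and lengths of legs and their mutual distances. First, two legs at distance $\ge D$ apart (for a fixed $D$ around $8$–$10$), neither of which is a single pendant vertex near an end, yield two disjoint copies of something like $T_{1,3,7}$. Each such copy has $\lambda_1>2.03$, so the pair contributes more than $4.06>2+\eta$, and Lemma \ref{lms5.2} applies, as in Case 2.2.2 of Lemma \ref{smlm1}. This forces all ``deep'' legs (those far from the ends of the spine) to cluster within bounded distance. Second, a cluster of legs near each other, or a leg of length $\ge3$ deep inside the spine, produces $\lambda_1(H)>\eta+2\varepsilon$ for an explicit $H$ such as $T_{2,3,L}$, $P^{1,d+1}_{1,1,L}$, or $P^{d,\cdot}_{2,1,\cdot}$. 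These are eliminated one finite subfamily at a time, with Hoffman--Smith subdivision (Lemma \ref{lm8}) used to reduce infinitely many distances $d$ to a single extremal test graph. What should survive is: one deep leg of length $1$ ($T_{1,c,n-c-2}$); a leg of length $2$ at distance $2$ from an end ($T_{2,2,n-5}$); and configurations consisting of a pendant at spine position $1$ or $2$ (near an end, so that it ``behaves like'' $Y$ or $W$) together with at most one deep pendant or one more end-pendant. Those give exactly the listed $P^{1,b}_{1,1,n-2}$, $P^{2,n-6}_{1,1,n-2}$, $P^{2,n-5}_{1,1,n-2}$, $P^{1,a,n-5}_{1,1,1,n-3}$ and $P^{1,n-6}_{1,2,n-3}$. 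Lemma \ref{lm9} serves as a guide to which single-radius configurations can survive. However, I cannot simply apply it, because $\lambda_1(T)\le\eta$ is not implied: an end-decoration contributes through $\lambda_1$ only, while the $k-1$ long path pieces cost almost nothing. Conversely, a deep decoration at one end plus one at the other end could split across $\lambda_1$ and $\lambda_2$. That is exactly why the two-subgraph Lemma \ref{lms5.2} is needed.

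I expect the main obstacle to be the last bookkeeping stage: showing that the \emph{two-end} configurations outside $\mathcal{T}$ are excluded. Examples are $Y$-type decorations at both ends combined with a deep pendant, or a length-$2$ leg at position $2$ combined with another decoration. Here $\lambda_1$ alone may be below $\eta$, and the excess must be certified by a pair $(H_1,H_2)$ with $\lambda_1(H_1)+\lambda_1(H_2)\ge 2+\eta+2\varepsilon$. One of the two values is only slightly above $2$, so the margins are tiny; this is why $\varepsilon=0.00006$ and test graphs of order about $30$ are needed. I would first try pairing a $T_{1,c,\cdot}$-type piece around the deep vertex with a $T_{1,2,L}$- or $D$-type piece around the far end, choosing $L$ so that both radii are numerically verified. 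If that margin fails, I would fall back on a direct Ky Fan test-vector computation, as in Lemma \ref{lms9}, for that residual family.
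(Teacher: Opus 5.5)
Your plan is essentially the paper's proof. The paper excludes $\Delta\ge 4$ via $T'$ and takes a longest path $P_s$ with $s>63$. It proves the open-quipu claim by excluding off-spine branching with an $8$-vertex double fork or $T_{2,3,4}$. It shows that each of $\{v_1,v_2\}$, $\{v_3,\ldots,v_{s-4}\}$, $\{v_{s-3},v_{s-2}\}$ contains at most one degree-$3$ vertex, handling close pairs with $P^{3,j-i+3}_{1,1,30}$ and far pairs with two disjoint copies of $T_{1,3,7}$. It forces deep legs to have length exactly $1$ via $T_{2,3,4}$; so length $\ge 2$, not $\ge 3$, is what must be excluded. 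It ends with a case split on the number of legs.

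The two-end bookkeeping you flag as the main obstacle is done exactly by your first suggestion. The paper uses the pairs $T_{1,4,11}\cup T_{1,2,10}$ and $T_{1,3,6}\cup T_{2,2,7}$ in Lemma~\ref{lms5.2}, together with the single graph $P^{2,16}_{1,1,30}$, which is where $\varepsilon=0.00006$ comes from. No Ky Fan test-vector fallback is needed.
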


\begin{proof} If $\Delta \ge 4$, the argument at the beginning of the proof of Lemma~\ref{smlm1} gives $S_k(T) \ge 2k + \eta - 2$, a contradiction. Hence $\Delta\leq3$.
If $\Delta=2$, then $T\cong P_n$, and hence $S_k(T)<2k$.
Therefore, it remains to consider $\Delta=3$.
 Let $P_s$ be a longest path in $T$ with $v_iv_{i+1}\in E(P_s)$ for $i=0,\,1,\ldots,\,s-2$.
   As $n \geq (2k+1)^{30+33(k-1)r_{0.00006}}$, by Lemma \ref{lms4}, we obtain 
    $$s>2\log_3n-2> 30+33(k-1)r_{0.00006}>63.$$
  \noindent
  ${\bf Claim\,3.}$ Every degree-3 vertex lies on $P_s$; in particular, $T$ is an open quipu.
  
  \vspace*{3mm} 
  \noindent\textbf{Proof of Claim 3.} 
  Suppose, to the contrary, that there exists a vertex
$u\in V(T)\setminus V(P_s)$ with $d_T(u)=3$. Let
$v_q\in V(P_s)$ be chosen so that
$$d_T(v_q,u)=\min_{0\leq i\leq s-1}d_T(v_i,u).$$
  As $P_s$ is a longest path and $d_T(u)=3$, then $2\leq q\leq s-3$.
  If $q=2$ or $q=s-3$, then $d(v_q, u) = 1$ (otherwise, it contradicts that $P_s$ is a longest path). Then $T$ contains a subgraph $T_1$ of order $8$, which is obtained from $T_{2,2,2}$ by adding a pendant edge to the vertex of degree $2$. Note that $\lambda_1(T_1)>2.101$.
 Setting $H=T_1$, $a = \eta$ and $\varepsilon = 0.021$ in Lemma \ref{lms5.1}, 
 we have $S_k(T) \geq 2k + \eta - 2$, is a contradiction. Otherwise, $3\leq q\leq s-4$. Then $T_{2,3,4}$ is a subgraph of $T$ and $\lambda_1(T_{2,3,4})> 2.064$.
Setting $H=T_{2,3,4}$, $a = \eta$ and $\varepsilon = 0.002$ in Lemma \ref{lms5.1}, we again get a contradiction. Therefore, for any $v \in V(T) \setminus V(P_s)$, we have $d_T(v) \le 2$, and thus $T$ is an open quipu.

  \vspace*{3mm} 
  By \textbf{Claim 3}, we assume that $T\cong P_{n_1,\,n_2,\ldots,\,n_t,\,s}^{m_1,\,m_2,\ldots,\,m_t}$.

  \noindent
  ${\bf Claim\,4.}$ $t\leq 3$. Moreover, each of the sets $\{v_1, v_2\}$, $\{v_{s-3}, v_{s-2}\}$  and $\{v_{3},v_4,\ldots, v_{s-4}\}$ contains at most one vertex of degree $3$.
  
  \vspace*{3mm} 
  \noindent\textbf{Proof of Claim 4.} 
 Let $v_i, v_j\in V(P_s)$ be be two vertices of degree $3$ with $3\leq i<j\leq s-4$. If $1 \le j-i \le 14$, then $P_{1,\,1,\,30}^{3,\,j-i+3}$ is a subgraph of $T$, where $\lambda_1(P_{1,\,1,\,30}^{3,\,j-i+3})\geq\lambda_1(P_{1,\,1,\,30}^{3,\,17}) \ge 2.0585$.
 Setting $H=P_{1,\,1,\,30}^{3,\,j-i+3}$, $a = \eta$ and $\varepsilon = 0.00015$ in Lemma \ref{lms5.1}, we get a contradiction. Otherwise, $j-i\geq 15$. Then there exist two vertex-disjoint copies of $T_{1,3,7}$, with $2\lambda_1(T_{1,3,7}) >4.0628$.
 Setting $H_1=H_2=T_{1,3,7}$, $b = 2+\eta$ and $\varepsilon = 0.0023$ in Lemma \ref{lms5.2}, we get a contradiction. 
 Thus $\{v_3,\ldots,v_{s-4}\}$ contains at most one vertex of degree
$3$. 
 Furthermore, no two degree-$3$ vertices on $P_s$ can be adjacent (otherwise, $T$ contains a subgraph $P_{1,\,1,\,6}^{1,\,2}$ with  $\lambda_1(P_{1,\,1,\,6}^{1,\,2})>2.074$, a contradiction by Lemma \ref{lms5.1})). Consequently, each of $\{v_1,v_2\}$ and
$\{v_{s-3},v_{s-2}\}$ contains at most one vertex of degree $3$.
This proves the claim.

  \vspace*{3mm} 
  \noindent
  ${\bf Claim\,5.}$  Suppose there exists $i\in [3,\,s-4]$ such that $d_T(v_i) = 3$. Then $n_i=1$. Moreover, $n_{s-3}\leq 1$ ($n_{2}\leq 1$) if $i=3$ ($i=s-4$); otherwise, $n_{s-3}=0$ ($n_{2}=0$).
  
  \vspace*{3mm} 
  \noindent\textbf{Proof of Claim 5.} 
  Due to the given condition, we conclude that $v_i$ is adjacent to a pendant vertex (otherwise, $T$ contains a subgraph $T_{2,\,3,\,4}$ with  $\lambda_1(T_{2,\,3,\,4})>2.064$, a contradiction by Lemma \ref{lms5.1}). Thus we have $n_i=1$. 
  Now we will prove $n_{s-3}\leq 1$ if $i=3$ and $n_{s-3}=0$ if $i\in [4,\,s-4]$ by contradiction. For this, we assume that  $n_{s-3}\geq  2$ if $i=3$ and $n_{s-3}\geq 1$ if $i\in [4,\,s-4]$. For $i=3$, $T_{1,3,6} \cup T_{2,2,7}$
 is a subgraph of $T$ as  $n_{s-3}\geq  2$, and hence  $\lambda_1(T_{1,3,6})> 2.028$, $\lambda_1(T_{2,2,7})>2.055$.  Setting $H_1=T_{1,3,6}$, $H_2=T_{2,2,7}$, $b = 2+\eta$ and $\varepsilon = 0.007$ in Lemma \ref{lms5.2}, we get a contradiction. For $i\in [4,\,s-18]$, $T_{1,2,10} \cup T_{1,4,11}$
 is a subgraph of $T$, and hence  $\lambda_1(T_{1,2,10})> 2.016$ and $\lambda_1(T_{1,4,11})>2.045$. Setting $H_1=T_{1,2,10}$, $H_2=T_{1,4,11}$, $b = 2+\eta$ and $\varepsilon = 0.0014$ in Lemma \ref{lms5.2}, we get a contradiction.
 Otherwise, $i \in [s-17,\,s-4]$. Then $P_{1,\,1,\,30}^{2,\,s-i-1}$ is a subgraph of $T$, where $\lambda_1(P_{1,\,1,\,30}^{2,\,s-i-1})\geq\lambda_1(P_{1,\,1,\,30}^{2,\,16}) >2.0583$, a contradiction by Lemma \ref{lms5.1} (take $a=\eta$ and $\epsilon=0.00006$). Hence we conclude that $n_{s-3}\leq 1$ if $i=3$ and $n_{s-3}=0$ if $i\in [4,\,s-4]$. By symmetry, we also obtain  $n_{2}\leq 1$ if $i=s-4$,  $n_{2}=0$ otherwise.
 
\vspace*{3mm} 
By ${\bf Claim\,4}$, we have $t\leq 3$.  We divide into the following three cases according to $t$. 

  \vspace*{3mm} 
  \noindent\textbf{Case 1.}  $t=3$. Then $T\cong P_{n_1,\,n_2,\,n_3,\,s}^{m_1,\,m_2,\,m_3}$. By {\bf Claims  4} and {\bf 5}, we have $n_2=1$.
  If $m_2=3$ or $m_2=s-4$, then $T$ contains $P_{1,\,1,\,7}^{1,\,3}$ or $P_{1,\,1,\,7}^{2,\,3}$ as a subgraph. Since $\lambda_1( P_{1,\,1,\,7}^{2,\,3})>\lambda_1( P_{1,\,1,\,7}^{1,\,3})>2.06$, then by Lemma \ref{lms5.1}, we get a contradiction. Otherwise,  $m_2\in [4,\,s-5]$. Then by {\bf Claim 5} with the fact that $P_s$ is the longest path, we have $T\cong P_{1,\,1,\,1,\,n-3}^{1,\,a,\,n-5}$ with $a\in [4,\,n-8]$.

  \vspace*{3mm} 
  \noindent\textbf{Case 2.}  $t=2$. Without loss of generality, we assume that $m_1\in[1,\,2]$. Then $m_2\geq 4$ as the subgraphs $P_{1,\,1,\,7}^{1,\,3}$ and $P_{1,\,1,\,7}^{2,\,3}$ are forbidden from the previous case. For $m_2\in[4,\,s-5]$, then by {\bf Claim 5}, we have $m_1=n_1=n_2=1$, that is, 
  $T\cong P_{1,\,1,\,n-2}^{1,\,b}$ with $b\in [4,\,n-7]$. For $m_2=s-4$, then by {\bf Claim 5}, we have $T\in \{P_{1,\,1,\,n-2}^{1,\,n-6},\,P_{1,\,1,\,n-2}^{2,\,n-6}\}$. For $m_2\in [s-3,\,s-2]$, then 
  $$T\in \{ P_{1,\,1,\,n-2}^{1,\,n-4},\, P_{1,\,1,\,n-2}^{2,\,n-4},\, P_{1,\,1,\,n-2}^{2,\,n-5},\,  P_{1,\,2,\,n-3}^{1,\,n-6},\,  P_{1,\,2,\,n-3}^{2,\,n-6},\,  P_{2,\,2,\,n-4}^{2,\,n-7}\}.$$
  By Lemma \ref{lms7}, we have $S_k(P_{1,\,1,\,n-2}^{1,\,n-4})<2k$. Thus we have $T\ncong P_{1,\,1,\,n-2}^{1,\,n-4}$. If $T\in\{ P_{2,\,2,\,n-3}^{1,\,n-6},\, P_{2,\,2,\,n-4}^{2,\,n-7}\}$, then $T$ contains $T_{1,2,9}\cup T_{2,2,7}$ as a subgraph, and hence $\lambda_1(T_{1,2,9})>2.015$ and $\lambda_1(T_{2,2,7})>2.055$. Setting $H_1=T_{1,2,9}$, $H_2=T_{2,2,7}$, $b = 2+\eta$ and $\varepsilon = 0.0009$ in Lemma \ref{lms5.2}, we get a contradiction. Hence $T\in \{  P_{1,\,1,\,n-2}^{2,\,n-4},\, P_{1,\,1,\,n-2}^{2,\,n-5},\,  P_{1,\,2,\,n-3}^{1,\,n-6}\}$. Note that $P_{1,\,1,\,n-2}^{2,\,n-4}\cong P_{1,\,1,\,n-2}^{1,\,n-5}$.
  
  \vspace*{3mm} 
  \noindent\textbf{Case 3.}  $t=1$. By {\bf Claim 5} with the fact that $P_s$ it the longest path in $T$, then $n_1=1$ if $m_1\notin\{3,\,n-5\}$, and $n_1\leq 2$ otherwise. Then $T\cong T_{2,2,n-5}$ or $T\cong T_{1,c,n-c-2}$ with $c\in [1,\,n-2]$. As $S_k(T_{1,1,n-3})<2k$ by Lemma \ref{lms7}, we have $c\in [2,\,n-4]$. 
  
    \vspace*{3mm} 

    To check all order conditions, the one-subgraph applications above have
$q\le32$ and $\varepsilon\ge 0.00006$. The two-subgraph applications have
$q_1+q_2\le31$ and the same lower bound on $\varepsilon$.
Their exponents are bounded respectively by
\[
 30+33(k-1)r_{0.00006}
 \quad\text{and}\quad
 28+32(k-2)\theta_{0.00006},
\]
and the latter is at most the former.  This completes the proof.

\end{proof}

\begin{remark}\label{sunremark1}{\rm 
The converse of Theorem~\ref{smlm3} does not hold in general; that
is, not every tree in $\mathcal{T}$ satisfies
\[
2k\leq S_k(T)<2k+\eta-2.
\]
Indeed, consider the family
$P_{1,1,n-2}^{1,b}$. Choose $b$ such that
$
n\geq
(2k+1)^{2b+1+(2b+4)(k-1)r_{\varepsilon'}},$
where
\[
2\varepsilon'
=
\lambda_1\left(P_{1,1,2b+1}^{1,b}\right)
-\eta.
\]
By Lemmas~\ref{lm4} and~\ref{lm9}, we have $\varepsilon'>0$.
Applying Lemma~\ref{lms5.1} with
\[
H=P_{1,1,2b+1}^{1,b},\qquad
a=\eta,\qquad
\varepsilon=\varepsilon',
\]
gives
\[
S_k\left(P_{1,1,n-2}^{1,b}\right)
>
2k+\eta-2.
\]
Moreover, since
$P_{1,1,1,n-3}^{1,b,n-5}$ contains
$P_{1,1,n-3}^{1,b}$ as an induced subgraph, the same choice of $b$
also gives
\[
S_k\left(P_{1,1,1,n-3}^{1,b,n-5}\right)
>
2k+\eta-2.
\]}
\end{remark}

\section{Concluding remarks}

Motivated by the classical Hoffman program for the adjacency spectral
radius, in this paper we study an additive Hoffman-type problem for
the sum of the $k$ largest eigenvalues of graphs. The classical
spectral-radius threshold $2$ naturally gives rise to the additive
threshold $2k$. We completely characterize the connected graphs $G$
of sufficiently large order satisfying $S_k(G)<2k.$
As a consequence, we prove that the path is the unique graph
minimizing $S_k(G)$ among all connected $n$-vertex graphs,
extending the corresponding result for $S_2(G)$ to general $k$.

We then consider the next range suggested by the classical Hoffman
theory. The interval between $2$ and $\sqrt{2+\sqrt5}$ for the
spectral radius leads naturally to the additive spectral range
\[
2k\leq S_k(G)<2k+\sqrt{2+\sqrt5}-2.
\]
For non-tree graphs, we obtain a complete characterization. For
trees, we show that every graph in this range must belong to the
explicit family $\mathcal{T}$ given in Theorem~\ref{smlm3}. Thus, the remaining part of this Hoffman-type classification problem is reduced to determining precisely which trees of $\mathcal{T}$ lie in the above spectral range.
This leads to the following problem.
\begin{problem}\label{sunprob1}
Determine all trees \(T\) from the family \(\mathcal{T}\) such that
$S_k(T)\in \left[2k,\; 2k+\eta - 2\right)$
for sufficiently large \(n\) relative to \(k\).
\end{problem}

Within $\mathcal{T}$, the cases $T_{2,2,n-5}$ and $T_{1,c,n-c-2}$ for $c\in[2,n-4]$ are easily shown to satisfy $S_k(T)\in [2k,\,2k+\eta - 2)$ by Lemma \ref{lm9}. The main difficulty lies in determining the precise conditions on the parameters $a$ and $b$ for the two families \(P_{1,1,1,n-3}^{1,a,n-5}\) and \(P_{1,1,n-2}^{1,b}\). 
For the remaining three families in $\mathcal{T}$, we expect that the
required upper bound can be obtained by arguments similar to those
used in the proof of Theorem~\ref{smlm6}. A solution of
Problem~\ref{sunprob1} would therefore complete the classification of
trees in this first additive Hoffman-type spectral range.

More generally, the results suggest studying further additive
analogues of the Hoffman program for $S_k(G)$. In particular, it is
natural to ask whether higher spectral-radius thresholds and the
corresponding structural classifications admit analogous counterparts
for $S_k(G)$.

 \vspace*{3mm}
 
 \noindent
 {\bf Acknowledgement.} S. Sun is supported by National Natural Science Foundation of China (Grant No. 12271484). K. C. Das is supported by National Research Foundation funded by the Korean government (Grant No. RS-2026-25475577). We hereby acknowledge that GPT--5.5 Plus was used solely for language checking and improvement.

 \vspace*{3mm}

\noindent
 \section*{Author contributions}
Shaowei Sun: Writing–review \& editing, Writing–original draft, Software, Methodology, Investigation, Conceptualization; Mengyao Guo: Writing–review \& editing, Writing–original draft, Software, Methodology, Investigation, Formal analysis; Hongyan Ge: Writing–review \& editing, Software, Methodology, Formal analysis;
Kinkar Chandra Das: Writing–review \& editing, Software, Methodology, Investigation, Formal analysis.

 \vspace*{3mm}

\noindent
{\bf Declaration of competing interest.} The authors declare that they have no competing financial interests or personal relationships that could have influenced this work.

 \vspace*{3mm}
 
\noindent
{\bf Data availability.} Not Applicable.


\vspace*{4mm}

 \begin{thebibliography}{99}

\bibitem{AH} M. Aouchiche, P. Hansen, A survey of automated conjectures in spectral graph theory, {\it Linear Algebra Appl.} {\bf 432} (9) (2010) 2293--2322.

 \bibitem{BN} A.E. Brouwer, A. Neumaier, The graphs with spectral radius between $2$ and $\sqrt{2+\sqrt{5}}$, {\it Linear Algebra Appl.} {\bf 114/115} (1989) 273--276.

 \bibitem{CDG} D. Cvetkovi\'c, M. Doob, I. Gutman, On graphs whose spectral radius does not exceed $\sqrt{2+\sqrt{5}}$, {\it Ars Comb.} {\bf 14} (1982) 225--239.
 
\bibitem{BOOK} D. Cvetkovi\'c, M. Doob, H. Sachs, {\it Spectra of Graphs-Theory and Application}, Academic Press, New York, III edition, Berth, Heidelberg, 1980.

\bibitem{CDK} S. M. Cioabă, E. R. van Dam, J. H. Koolen, J. Lee, Asymptotic results on the spectral radius and the diameter of graphs, {\it Linear Algebra Appl.} {\bf 432} (2010) 722--737.
\bibitem{DMS} K. C. Das, S. A. Mojallal, S. Sun, On the sum of the $k$ largest eigenvalues of graphs and maximal energy of bipartite graphs, {\it Linear Algebra Appl.} {\bf 569} (2019) 175--194.
\bibitem{EMNA} J.B. Ebrahimi, B. Mohar, V. Nikiforov, A.S. Ahmady, On the sum of two largest eigenvalues of a symmetric matrix,  {\it Linear Algebra Appl.\/} 429 (2008) 2781--2787.

\bibitem{FAN2} K. Fan, On a theorem of Weyl concerning eigenvalues of linear transformations I, {\it Proc. Natl. Acad. Sci. USA} {\bf 35} (1949) 652--655.

 \bibitem{HW} J. Huang,  W. Wei, The exact maximum of the spectral sum of graphs, arXiv:2607.23081 (17 Pages)
 
 \bibitem{Hoffman} A.J. Hoffman, On limit points of spectral radii of non-negative symmetric integral matrices, in: Y. Alavi, et al. (Eds.), {\it Lecture Notes Math.} {\bf 303} (1972) 165--172.
 
 \bibitem{HS} A.J. Hoffman, J.H. Smith, {\it On the spectral radii of topologically equivalent graphs, in: M. Fiedler (Ed.), Recent Advances in Graph Theory}, Academia Praha, 1975, pp. 273--281.

\bibitem{KLMPT} H. Kumar, L. Liu, H. Monterde, S. Pragada, M. Tait, Maximum spectral sum of graphs, arXiv: 2604.00512v2 (26 Pages)
 \bibitem{KMPZ} H. Kumar, B. Mohar, S. Pragada, H. Zhan, Convex combination of first and second eigenvalues of trees, arXiv:2601.10036 (27 Pages)

\bibitem{MOHAR} B. Mohar, On the sum of $k$ largest eigenvalues of graphs and symmetric matrices, {\it J. Combin. Theory Ser. B.} {\bf 99} (2009) 306--313.

\bibitem{NIKIFOROV} V. Nikiforov, Beyond graph energy: norms of graphs and matrices, {\it Linear Algebra Appl.} {\bf 506} (2016) 82--138.

\bibitem{NI} V. Nikiforov, Linear combinations of graph eigenvalues, {\it Electron. J. Linear Algebra} {\bf 15} (2006) 329--336.

\bibitem{SCH} I. Schur, \"{U}ber eine Klasse von Mittelbildungen mit Anwendungen auf die Determinantentheorie, {\it Sitzungsber. Berl. Math. Ges.} {\bf 22} (1923) 9--20.

\bibitem{SMITH} J. H. Smith, {\it Some properties of the spectrum of a graph, Combinatorial Structures and their Applications}, Gordon and Breach, New York, 1970. 

\bibitem{SAGE} W.A. Stein, Sage Mathematics Software (Version 9.5), The Sage Development Team, http://www.sagemath.org, 2015.

\bibitem{SMD1} S. Sun, Y. Min, K. C. Das, Extremal graphs for the sum of two largest eigenvalues,
{\it AIMS Mathematics} {\bf 11} (5) (2026) 15028--15036.

\bibitem{SMD2} S. Sun, Y. Min, K. C. Das, Sum of the $k$ largest eigenvalues of symmetric matrices: theory and applications, arXiv:2605.26707 (30 Pages)

\bibitem{WS} W. Wang, W. So, Graph energy change due to any single edge deletion, {\it Electron. J. Linear Algebra} {\bf 16} (2007) 291--299.

\bibitem{WWBBW} J. Wang, J. Wang, M. Brunetti, F. Belardo, L. Wang, Developments on the Hoffman program of graphs, {\it Adv. Appl. Math.} {\bf 169} (2025) 102915.

 \end{thebibliography}
\end{document}